\numberwithin{equation}{section}
\newtheorem{theorem}{Theorem}[section]
\newtheorem{proposition}{Proposition}[section]
\newtheorem{lemma}{Lemma}[section]
\newtheorem{remark}{Remark}[section]
\newtheorem{corollary}{Corollary}[section]
\renewcommand{\epsilon}{\eps}
\renewcommand{\i}{{\rm i}}
\newcommand{\A}{{\mathcal A}}
\newcommand{\CC}{{\mathcal C}}
\newcommand{\C}{{\mathbb C}}
\newcommand{\N}{{\mathbb N}}
\newcommand{\R}{{\mathbb R}}
\newcommand{\eps}{\varepsilon}
\newcommand{\M}{\mathscr{M}}
\newcommand{\iu}{{\rm i}}
\newcommand{\pnorm}[2][]{\if #1'' \left|#2\right|_p \else \left|#2\right|_{#1} \fi}
\newcommand{\loc}{{\rm loc}}
\renewcommand{\theta}{\vartheta}
\DeclareMathOperator{\supp}{supp}
\title{New characterizations of magnetic Sobolev spaces}
\author[H.-M. Nguyen]{Hoai-Minh Nguyen}
\author[A.\ Pinamonti]{Andrea Pinamonti}
\author[M.\ Squassina]{Marco Squassina}
\author[E.\ Vecchi]{Eugenio Vecchi}
\address[H.-M. Nguyen]{Department of Mathematics \newline\indent
	EPFL SB CAMA \newline\indent
	Station 8 CH-1015 Lausanne, Switzerland}
\email{hoai-minh.nguyen@epfl.ch}
\address[A.\ Pinamonti]{Dipartimento di Matematica \newline\indent
	Universit\`a di Trento \newline\indent
	Via Sommarive 14, 38050 Povo (Trento), Italy}
\email{andrea.pinamonti@unitn.it}
\address[M.\ Squassina]{Dipartimento di Matematica e Fisica \newline\indent
	Universit\`a Cattolica del Sacro Cuore \newline\indent
	Via dei Musei 41, I-25121 Brescia, Italy}
\email{marco.squassina@unicatt.it}
\address[E.\ Vecchi]{Dipartimento di Matematica \newline\indent
	Universit\`a di Bologna \newline\indent
	Piazza di Porta S. Donato 5, 40126, Bologna, Italy}
\email{eugenio.vecchi2@unibo.it}
\thanks{A.P., M.S.\ and E.V.  are members of {\em Gruppo Nazionale per l'Analisi Ma\-te\-ma\-ti\-ca, la Probabilit\`a e le loro Applicazioni} (GNAMPA) of the {\em Istituto Nazionale di Alta Matematica} (INdAM). E.V. receives funding from the People Programme (Marie Curie Actions) of the European Union's Seventh
	Framework Programme FP7/2007-2013/ under REA grant agreement No.\ 607643 (ERC Grant MaNET `Metric Analysis for Emergent Technologies').}
\subjclass[2010]{49A50, 26A33, 82D99}
\keywords{Magnetic Sobolev spaces, new characterization, nonlocal functionals}
\begin{document}
	\begin{abstract}
		We establish two new characterizations of magnetic Sobolev spaces  for Lipschitz magnetic fields in terms of nonlocal functionals. The first one is related to the BBM formula, due to Bourgain, Brezis, and Mironescu. The second one is related to the work of  the first author on the classical Sobolev spaces. We also study the convergence almost everywhere and the convergence in $L^1$ appearing  naturally in these contexts. 
		
		\end{abstract}
	
	\maketitle
	
	
	\section{Introduction}
	In electromagnetism, a relevant role in the study of particles which interact 
	with a magnetic field $B=\nabla\times A$, $A:\R^3\to\R^3$,  is played by the magnetic Laplacian $(\nabla-\iu A)^2$ \cite{AHS,reed,LL}. This 
	yields to nonlinear Schr\"odinger equations of the type $- (\nabla-\iu A)^2 u + u = f(u),$ which have been extensively studied (see e.g.\ \cite{EstLio,arioliSz,mills,lauli} 
	and the references therein). The linear operator $- (\nabla-\iu A)^2 u$ is defined weakly as the differential of the energy functional
	$$
	H_{A}^{1}(\R^N)\ni u\mapsto \int_{\R^N}|\nabla u-\i A(x)u|^2dx,
	$$
	over complex-valued functions $u$ on $\R^N$. Here $\i$ denotes the imaginary unit and $|\cdot|$ the standard Euclidean norm of $\C^N$.   Given a  measurable function $A:\R^N\to\R^N$ and given  an open subset $\Omega$ of $\R^N$, one defines $H^{1}_A(\Omega)$ as the space of complex-valued functions $u\in L^2(\Omega)$ such that  $\|u\|_{H^1_A(\Omega)}<\infty$
	for the norm
	$$
	\|u\|_{H^{1}_A(\Omega)}:=\Big(\|u\|_{L^2(\Omega)}^2+[u]_{H^{1}_A
		(\Omega)}^2\Big)^{1/2},\quad 
	[u]_{H^{1}_A(\Omega)}:=\Big(\int_{\Omega}|\nabla u-\i A(x)u|^2dx\Big)^{1/2}.
	$$

	In \cite{I10}, some physically motivated  nonlocal versions of the 
	local magnetic energy were introduced. In particular the operator $(-\Delta)^s_A$ is defined as the gradient of the nonlocal energy functional
\begin{equation*}
	H^s_A(\R^N)\ni u\mapsto (1-s)\iint_{\R^{2N}}\frac{|u(x)-e^{\i (x-y)\cdot A\left(\frac{x+y}{2}\right)}u(y)|^2}{|x-y|^{N+2s}}dx \, dy,
\end{equation*}
	where $s\in (0,1)$. Recently, the existence of ground stated of $(-\Delta)^s_Au+u=f(u)$ was investigated in \cite{piemar} via Lions concentration compactness arguments.
	In \cite{BM} a  connection between the local and nonlocal notions was obtained on bounded domains, precisely, if $\Omega\subset\R^N$ is a bounded Lipschitz domain and
	$A\in C^2(\R^N)$, then for every $u\in H^1_{A}(\Omega)$ it holds
	\begin{equation}
	\label{limit-introduction}
	\lim_{s\nearrow 1}(1-s)\int_{\Omega}\int_{\Omega}\frac{|u(x)-e^{\i (x-y)\cdot A\left(\frac{x+y}{2}\right)}u(y)|^2}{|x-y|^{N+2s}}dx \, dy=
	Q_N\int_{\Omega}|\nabla u-\i A(x)u|^2dx,
	\end{equation}
	where 
	\begin{equation}
		\label{valoreK}
		Q_{N}:=\frac{1}{2}\int_{{\mathbb S}^{N-1}}|{\boldsymbol \omega}\cdot \sigma|^{2}d \sigma
	\end{equation}
	being ${\mathbb S}^{N-1}$ the unit sphere in $\R^N$
	and ${\boldsymbol \omega}$ an arbitrary unit vector of $\R^N$. 
	See also \cite{PSV} for the general case of the $p$-norm with $1 \le p < + \infty$
	as well as  \cite{PSV2} where the limit as $s\searrow 0$ is covered.
	This provides a new characterization of the $H^1_A$ norm in terms of nonlocal functionals extending the results by Bourgain, Brezis and Mironescu \cite{bourg,bourg2} (see also \cite{davila,ponce}) to the magnetic setting. 
	Let $\{s_n\}_{n\in\N}$ be a sequence of positive numbers converging to $1$ and less than $1$ and  set 
	\begin{equation*}
	 \rho_{n}(r) :=\left\{ \begin{array}{cl}
	2 (1 - s_n) \mbox{diam}(\Omega)^{2s_n -2}r^{2 -2s_n - N} &  \mbox{ for } 0 < r  \le  \mbox{diam} (\Omega), \\[4pt] 
	0 & \mbox{ for } r> \mbox{diam} (\Omega),  
	\end{array}\right. 
	\end{equation*}
	where $\mbox{diam} (\Omega)$ denotes the diameter of $\Omega$. 
	We have $\int_{0}^\infty \rho_n(r) r^{N-1}dr  = 1$ and, for all $\delta>0,$
	$$
	\lim_{n \to + \infty} \int_{\delta}^\infty \rho_n(r) r^{N-1} \, dr  = 0. 
	$$
	Given  $u: \Omega \to\C$ a measurable complex-valued function, we denote
	\begin{equation*}
		\label{def-Psi}
	\Psi_u(x,y):=e^{\i (x-y)\cdot A\left(\frac{x+y}{2}\right)}u(y),\quad\,\, x,y\in \Omega. 
	\end{equation*}
The function $\Psi_u(\cdot, \cdot)$ also depends on $A$ but for notational ease, we ignore it. Assertion \eqref{limit-introduction} can be then written as 
	\begin{equation}\label{limit-introduction-1}
	\lim_{n \to  + \infty} \int_{\Omega}\int_{\Omega}\frac{|\Psi_u(x, y) - \Psi_u(x, x)|^2}{|x-y|^{2}}  \rho_n (|x -y|) \, dx \, dy=
	2 Q_N\int_{\Omega}|\nabla u-\i A(x)u|^2dx.
	\end{equation}


This paper is concerned with the {\em whole space} setting. Our  first goal is to obtain formula \eqref{limit-introduction-1} for $\Omega=\R^N$
and to provide a  characterization of $H^1_A(\R^N)$ in terms of the LHS of \eqref{limit-introduction-1} in the spirit of the work of Bourgain, Brezis and Mironescu.  

\medskip 
\noindent
Here and in what follows, a sequence of nonnegative radial functions $\{\rho_n\}_{n\in\N}$ 
is called a {\em sequence of mollifiers} if it satisfies the conditions
\begin{equation}
\label{cond-kern}
	\int_{0}^{\infty} \rho_n(r) r^{N-1} dr=1 \quad \mbox{ and } \quad 		\lim_{n\to +  \infty}\int_{\delta}^{\infty} \rho_n(r) r^{N-1} dr=0,\,\,\, \mbox{for all $\delta > 0$}. 
\end{equation} 

In this direction, we have 
\begin{theorem} 
	\label{main-BBM} 
	Let  $A:\R^N\to\R^N$ be Lipschitz  and let $\{\rho_n\}_{n\in\N}$ be a sequence of nonnegative radial mollifiers. 
Then $u \in H^1_A(\R^N)$ if and only if $u \in L^2(\R^N)$ and 
\begin{equation}\label{main-BBM-1}
\sup_{n \in \N}  \iint_{\R^{2N}} \frac{|\Psi_u (x, y) -  \Psi_u(x, x)|^2}{|x - y|^2} \rho_n (|x - y|) \, dx \, dy < + \infty. 
\end{equation}
Moreover, for $u \in H^1_A(\R^N)$, we have
\begin{equation}\label{main-BBM-2}
\lim_{n \to + \infty} \iint_{\R^{2N}} \frac{|\Psi_u (x, y) -  \Psi_u(x, x)|^2}{|x - y|^2} \rho_n (|x - y|) \, dx \, dy = 2 Q_N  \int_{\R^N} |\nabla u - \i A(x) u|^2 \, dx,
\end{equation}
and 
\begin{multline}\label{main-BBM-3}
\iint_{\R^{2N}} \frac{|\Psi_u (x, y) -  \Psi_u(x, x)|^2}{|x - y|^2} \rho_n (|x - y|) \, dx \, dy \\
\le 2|\mathbb{S}^{N-1}| \int_{\R^N} |\nabla u - \i A(x) u|^2 \, dx + 2|\mathbb{S}^{N-1}| \big(2+\|\nabla A \|_{L^\infty(\R^N)}^2 \big)  \int_{\R^N} |u|^2 \, dx. 
\end{multline}
\end{theorem}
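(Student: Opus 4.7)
The starting point is a quantitative pointwise bound. For $u \in C_c^\infty(\R^N)$, parametrizing the segment $[x,y]$ by $\gamma_t := x+t(y-x)$ and differentiating $f(t) := e^{-\i t (y-x)\cdot A((x+y)/2)} u(\gamma_t)$ yields the identity
\begin{multline*}
u(y) - e^{\i(y-x)\cdot A((x+y)/2)} u(x) \\
= \int_0^1 e^{\i(1-t)(y-x)\cdot A((x+y)/2)} (y-x) \cdot \bigl[\nabla u(\gamma_t) - \i A((x+y)/2) u(\gamma_t)\bigr]\, dt.
\end{multline*}
Observing that $|\Psi_u(x,y) - \Psi_u(x,x)| = |u(y) - e^{\i(y-x)\cdot A((x+y)/2)} u(x)|$, replacing $A((x+y)/2)$ by $A(\gamma_t)$ at the Lipschitz cost $\|\nabla A\|_{L^\infty}|y-x||t-\tfrac12|$, and applying Cauchy--Schwarz on the $t$-integral gives
\begin{multline*}
\frac{|\Psi_u(x,y) - \Psi_u(x,x)|^2}{|y-x|^2} \le 2 \int_0^1 |\nabla u(\gamma_t) - \i A(\gamma_t) u(\gamma_t)|^2\, dt \\
+ 2\|\nabla A\|_{L^\infty}^2 |y-x|^2 \int_0^1 \bigl(t-\tfrac12\bigr)^2 |u(\gamma_t)|^2\, dt.
\end{multline*}

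To derive \eqref{main-BBM-3}, I would split the $(x,y)$-integral into $\{|y-x|\le 1\}$ and $\{|y-x|>1\}$. On the near-diagonal region the factor $|y-x|^2$ in the second term above is $\le 1$, and the change of variables $(x,y)\mapsto(\gamma_t, y-x)$ (unit Jacobian) together with the normalization $\int_{\R^N} \rho_n(|\xi|)\,d\xi = |\mathbb{S}^{N-1}|$ coming from \eqref{cond-kern} produces $2|\mathbb{S}^{N-1}|\|\nabla u - \i A u\|_{L^2}^2$ plus a multiple of $\|u\|_{L^2}^2$. On the far region the crude estimate $|\Psi_u(x,y)-\Psi_u(x,x)|^2 \le 2(|u(x)|^2 + |u(y)|^2)$ combined with $|y-x|^{-2}\le 1$ contributes an additional $4|\mathbb{S}^{N-1}|\|u\|_{L^2}^2$. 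This yields \eqref{main-BBM-3} for $u\in C_c^\infty$; Fatou's lemma together with density of $C_c^\infty(\R^N)$ in $H^1_A(\R^N)$ (standard for Lipschitz $A$, via truncation and mollification) extends the bound to all $u\in H^1_A$, and in particular gives the forward implication of the equivalence.

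For the limit \eqref{main-BBM-2} applied to $u\in C_c^\infty(\R^N)$, I would pass to polar coordinates $y=x+r\omega$ and set
\begin{equation*}
G(r) := \int_{\R^N}\!\int_{\mathbb{S}^{N-1}} \frac{|\Psi_u(x, x+r\omega) - \Psi_u(x,x)|^2}{r^2}\, d\omega\, dx,
\end{equation*}
so that the integral in \eqref{main-BBM-1} equals $\int_0^\infty G(r)\rho_n(r) r^{N-1}\,dr$. The pointwise bound above (and compact support of $u$) bound $G$ uniformly on $(0,\infty)$; the pointwise convergence $r^{-2}|\Psi_u(x,x+r\omega)-\Psi_u(x,x)|^2 \to |\omega \cdot (\nabla u - \i A u)(x)|^2$ as $r\to 0^+$ and dominated convergence give $G(r) \to G(0^+) = 2Q_N \|\nabla u - \i A u\|_{L^2}^2$, and the conditions in \eqref{cond-kern} then imply $\int_0^\infty G\rho_n r^{N-1}\,dr \to G(0^+)$. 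Extension to arbitrary $u\in H^1_A$ follows by a standard $3\varepsilon$-argument using the uniform bound \eqref{main-BBM-3}.

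The converse is the crux of the proof. Suppose $u\in L^2(\R^N)$ and $M:=\sup_n I_n(u)<\infty$, where $I_n(u)$ denotes the integral in \eqref{main-BBM-1}. Let $\zeta_\eta$ be a smooth nonnegative radial mollifier of radius $\eta$ and set $u_\eta := u*\zeta_\eta$. Writing $\Psi_{u_\eta}(x,y)-u_\eta(x) = \int \zeta_\eta(z)[e^{\i(x-y)\cdot A((x+y)/2)} u(y-z)-u(x-z)]\,dz$, Jensen's inequality, the translation $(x,y)\mapsto(x-z,y-z)$, and the Lipschitz shift estimate $|A((x+y)/2)-A((x+y)/2-z)|\le\|\nabla A\|_{L^\infty}|z|$ together yield
\begin{equation*}
I_n(u_\eta) \le 2 I_n(u) + 2|\mathbb{S}^{N-1}|\|\nabla A\|_{L^\infty}^2 \eta^2 \|u\|_{L^2}^2,
\end{equation*}
so $\sup_n I_n(u_\eta) \le 2M + C\eta^2 \|u\|_{L^2}^2$ uniformly in $\eta\in(0,1]$. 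Since $u_\eta$ is smooth, a Fatou argument applied to the pointwise limit $r^{-2}|\Psi_{u_\eta}(x,x+r\omega)-\Psi_{u_\eta}(x,x)|^2 \to |\omega\cdot(\nabla u_\eta - \i A u_\eta)(x)|^2$ combined with the mollifier property \eqref{cond-kern} gives $2Q_N \|\nabla u_\eta - \i A u_\eta\|_{L^2}^2 \le \liminf_n I_n(u_\eta) \le 2M + C\eta^2\|u\|_{L^2}^2$. Hence $\{\nabla u_\eta - \i A u_\eta\}_\eta$ is uniformly bounded in $L^2$; extracting a subsequential weak $L^2$-limit $v$ and passing to the limit in the integration-by-parts identity $\int(\nabla u_\eta - \i A u_\eta)\cdot\bar\phi\,dx = -\int u_\eta(\mathrm{div}\bar\phi + \i A\cdot\bar\phi)\,dx$ for $\phi\in C_c^\infty(\R^N,\C^N)$ (using $u_\eta\to u$ in $L^2$ and local boundedness of $A$) identifies $v$ as the distributional $\nabla u - \i A u$, proving $u\in H^1_A(\R^N)$. \emph{The main obstacle} is precisely this converse step: the Lipschitz hypothesis on $A$ is used in an essential quantitative way through the translation-shift estimate, and a merely continuous $A$ would not yield the comparison above.
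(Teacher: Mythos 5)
Your proposal is correct and follows essentially the same route as the paper's proof: the pointwise segment estimate combined with the mollifier normalization and the near/far-diagonal splitting is the paper's Lemma~\ref{lem-L1}, the limit formula for smooth compactly supported functions extended by density mirrors Lemmas~\ref{lem-L2}--\ref{lem-L3}, and the converse via mollification of $u$, the Jensen/translation/Lipschitz-phase-shift estimate, the liminf lower bound for smooth functions, and weak $L^2$-compactness is exactly the argument of Lemma~\ref{lem-R1} (you even make explicit the weak-limit identification that the paper leaves implicit). The only minor slip is the claim that your segment bound plus compact support controls $G(r)$ uniformly on $(0,\infty)$: that bound grows like $r^2$ because of the $\|\nabla A\|_{L^\infty}^2|y-x|^2$ term, so for $r\ge 1$ you must instead invoke the crude estimate $G(r)\le 4|\mathbb{S}^{N-1}|r^{-2}\|u\|_{L^2}^2$, which you already use in the far-region part of the upper bound.
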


In this paper, $|\mathbb{S}^{N-1}|$ denotes the $(N-1)$-Hausdorff measure of the unit sphere $\mathbb{S}^{N-1}$ in $\R^N$. 

\medskip
The proof of Theorem~\ref{main-BBM} is given in Section~\ref{sect-BBM}.

\begin{remark} \rm 
	Similar results as in Theorem~\ref{main-BBM} hold for more general mollifiers $\{ \rho_n\}_{n \in \N}$ with slight changes in the constants.  See Remark~\ref{rem-Hs} for details.
\end{remark}
			
The second goal of this paper is to characterize $H^1_A(\R^N)$ in term of $J_\delta (\cdot )$ where, for $\delta > 0$,  
\begin{equation*}
J_\delta(u): = \iint_{\{|\Psi_u(x,y)-\Psi_u(x,x)|>\delta\}}\frac{\delta^2}{|x-y|^{N+2}}\, dxdy,\quad \mbox{for $u \in L^1_{{\rm loc}} (\R^N)$}. 
\end{equation*}
This is motivated by the characterization of the Sobolev space $H^1(\R^N)$ provided in \cite{BourNg, nguyen06} (see also \cite{bre,bre-linc,BHN,BHN2,BHN3, NgSob2, NgGamma, nguyen11, NgSob4})
	in terms of the family of nonlocal functionals $I_\delta$ which is defined by, for $\delta > 0$, 
\begin{equation*}
	I_\delta(u):=\iint_{\{|u(y)-u(x)|>\delta\}}\frac{\delta^2}{|x-y|^{N+2}}dx \, dy,\quad  \mbox{ for } u \in L^1_{{\rm loc}}(\R^N). 
\end{equation*}
It was showed in \cite{BourNg, nguyen06} that if $u\in L^2(\R^N)$,
	then $u\in H^1(\R^N)$ if and only if $\sup_{0<\delta<1} I_\delta(u)<\infty$; moreover, 
	\begin{equation*}
		\lim_{\delta\searrow 0} I_\delta(u)=Q_N\int_{\Omega}|\nabla u|^2dx, \quad \mbox{for $u \in H^1(\R^N)$}. 
	\end{equation*}
	 Concerning this direction, we establish 
	
	\begin{theorem}
		\label{main} Let $A:\R^N\to\R^N$ be Lipschitz.  Then $u\in H^1_A(\R^N)$ if and only if $u \in L^2(\R^N)$ and 
		\begin{equation}\label{bound}
		\sup_{0<\delta<1} J_\delta(u) < + \infty.
		\end{equation}
		Moreover, we have, for $u \in H^1_A(\R^N)$, 
		\begin{equation*}
		\lim_{\delta\searrow 0} J_\delta(u) = 	Q_{N}\int_{\R^N}|\nabla u-\i A(x)u|^2\, dx
		\end{equation*}
		and 
		\begin{equation}\label{main-estimate}
		\sup_{\delta > 0}J_\delta (u)
\leq C_N  \left( \int_{\R^N}|\nabla u-\i A(x)u|^2\, dx + \big(\|\nabla A\|_{L^\infty(\R^N)}^2 + 1 \big) \int_{\R^N} |u|^2  \, dx \right). 
		\end{equation}
	\end{theorem}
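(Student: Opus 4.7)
The strategy mirrors that of Theorem~\ref{main-BBM}, combined with the techniques of \cite{BourNg, nguyen06} for the scalar functional $I_\delta$. Write $V(x) := |\nabla u(x) - \i A(x) u(x)|$, $U(x) := |u(x)|$, and $h := y - x$. The common starting point is a path-integral linearization: differentiating $g(t) := \Psi_u(x, x + t h)$ as in the proof of Theorem~\ref{main-BBM}, one obtains the pointwise bound
\begin{equation*}
|\Psi_u(x,y) - u(x)| \le |h|\int_0^1 V(x+th)\, dt + C\, |h|^2\, \|\nabla A\|_{L^\infty(\R^N)} \int_0^1 U(x+th)\, dt.
\end{equation*}

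For the upper bound \eqref{main-estimate}, I would first assume $u \in C^\infty_c(\R^N)$. On $\{|\Psi_u(x,y) - u(x)| > \delta\}$, at least one of the two summands above exceeds $\delta/2$. Splitting $J_\delta(u)$ into the corresponding two pieces, each is controlled by a variant of the scalar $I_\delta$-estimate of \cite{nguyen06} applied to $V$ and $U$, via Cauchy--Schwarz along the segment and an explicit truncation at $|h|\sim 1$; the $|h|^2$-term is exactly what produces the $(\|\nabla A\|_{L^\infty}^2 + 1)\|u\|_{L^2}^2$ correction. Extension by density then yields \eqref{main-estimate} for all $u \in H^1_A(\R^N)$.

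For the limit, I would again begin with smooth compactly supported $u$. A Taylor expansion gives $\Psi_u(x,y) - u(x) = h \cdot (\nabla u - \i A u)(x) + o(|h|)$ uniformly on compact sets, so by dominated convergence the indicator $\mathbf{1}_{\{|\Psi_u - u(x)| > \delta\}}$ is asymptotically equivalent to $\mathbf{1}_{\{|h \cdot (\nabla u - \i A u)(x)| > \delta\}}$; polar coordinates and the definition \eqref{valoreK} of $Q_N$ then give $\lim_{\delta \to 0} J_\delta(u) = Q_N \int_{\R^N} V^2$, which is extended to general $u \in H^1_A(\R^N)$ by density using \eqref{main-estimate}. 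For the converse, if $u \in L^2$ and $\sup_{0<\delta<1} J_\delta(u) < \infty$, the plan is to reduce to Theorem~\ref{main-BBM} via the layer-cake identity $|\Psi_u(x,y) - u(x)|^2 = \int_0^\infty 2\delta\, \mathbf{1}_{\{|\Psi_u(x,y) - u(x)| > \delta\}}\, d\delta$: for an appropriate mollifier sequence $\{\rho_n\}$ the BBM-type integral in \eqref{main-BBM-1} is rewritten as a weighted average (in $\delta$) of $\iint \rho_n(|h|)|h|^{-2}\mathbf{1}_{\{|\Psi_u-u(x)|>\delta\}}\,dxdy$, which is dominated by $\sup_\delta J_\delta(u)$ plus an $L^2$-controlled tail; Theorem~\ref{main-BBM} then forces $u \in H^1_A(\R^N)$.

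The main obstacle I anticipate is the a priori estimate \eqref{main-estimate}: transferring the classical $I_\delta$-argument (which crucially exploits that $|u(y) - u(x)|$ is controlled by a line integral of $|\nabla u|$) to an indicator function depending on the non-local quantity $\Psi_u$ requires a careful slicing scheme for the segment integrals of $V$ and $U$, together with a truncation in $|h|$ to close the $L^2$-bound for the quadratic-in-$|h|$ remainder. Once that estimate is in place, both the limit and the converse follow along reasonably standard lines.
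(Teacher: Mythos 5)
Your treatment of the upper bound \eqref{main-estimate} and of the limit formula is essentially the paper's own argument (Lemmas~\ref{lem-variant} and \ref{formul}): the segment bound $|\Psi_u(x,x+h\sigma)-\Psi_u(x,x)|\le h\,\M_\sigma(|\nabla u-\i Au|,x)+h^2\|\nabla A\|_{L^\infty(\R^N)}\M_\sigma(|u|,x)$, the splitting ``$a+b>\delta$ implies $a>\delta/2$ or $b>\delta/2$'', directional maximal-function estimates (this is exactly the ``slicing scheme'' you anticipate as the obstacle), and, for the limit, a rescaled polar-coordinate computation with dominated convergence for $u\in C^1_c$. One point to make explicit: since $J_\delta$ is not subadditive in $u$, the density step for the limit is not automatic from \eqref{main-estimate} alone; the paper uses $J_\delta(u)\le(1-\eps)^{-2}J_{(1-\eps)\delta}(v)+\eps^{-2}J_{\eps\delta}(u-v)$ and only then invokes the a priori bound for $u-v$.

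The genuine gap is in the converse direction. The layer-cake reduction to Theorem~\ref{main-BBM} cannot work as described. Writing $|\Psi_u(x,y)-\Psi_u(x,x)|^2=\int_0^\infty 2\delta\,\mathbf{1}_{\{|\Psi_u(x,y)-\Psi_u(x,x)|>\delta\}}\,d\delta$ and bounding, for each fixed $\delta$, the inner integral $\iint\rho_n(|x-y|)|x-y|^{-2}\mathbf{1}_{\{\cdots\}}\,dx\,dy$ by $c_n\,\delta^{-2}J_\delta(u)$ (which is the only way $J_\delta$ enters, using $\rho_n(r)r^{-2}\le c_n r^{-N-2}$), you are left with $\int_0^1 2\delta\cdot c_n\delta^{-2}J_\delta(u)\,d\delta\le 2c_n\sup_{0<\delta<1}J_\delta(u)\int_0^1\delta^{-1}\,d\delta=+\infty$: the constant $c_n$ is independent of $\delta$ and cannot absorb the logarithmic divergence at $\delta=0$, so ``dominated by $\sup_\delta J_\delta(u)$ plus an $L^2$ tail'' is false as an inequality, even though the conclusion is true a posteriori. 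There is also an unaddressed large-$\delta$ issue: without first truncating ($u_M={\mathcal T}_M(u)$, which satisfies $J_\delta(u_M)\le J_\delta(u)$), you have no useful decay of $J_\delta(u)$ for $\delta\ge 1$. The paper's route avoids both problems: after truncation it integrates $J_\delta$ against the weight $\eps\delta^{\eps-1}\,d\delta$ on $(0,L)$ (integrable at $0$), and Fubini then controls $\frac{1}{2+\eps}\iint\frac{|\Psi_u(x,y)-\Psi_u(x,x)|^{2+\eps}}{|x-y|^{2+\eps}}\,\frac{\eps}{|x-y|^{N-\eps}}\,dx\,dy$ uniformly in $\eps$; membership in $H^1_A(\R^N)$ is then deduced not from Theorem~\ref{main-BBM} itself but from the variable-exponent lower bound \eqref{stimasit2} of Lemma~\ref{lem-L2} (combined with mollification as in Lemma~\ref{lem-R1}), letting $\eps\to 0$, and finally $M\to+\infty$ via Lemma~\ref{formul} and \eqref{R2-part1}. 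If you wish to keep your plan you must replace the quadratic layer-cake weight by such a $\delta^{\eps-1}$ weight, which produces $(2+\eps)$-energies rather than the fixed-exponent functionals of \eqref{main-BBM-1}, and correspondingly prove a BBM-type lower bound with exponents $2+\eps_n\to 2$ --- precisely the purpose of \eqref{stimasit2} in the paper.
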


	\medskip 
Throughout the paper, we shall denote by  $C_N$ a generic positive constant depending only on $N$ and possibly changing from line to line.

	\medskip 
	The proof of Theorem~\ref{main} is given in Section~\ref{sect-H1A}. 

\medskip 
As pointed out in \cite{EstLio}, a physically meaning example of magnetic potential in the space is
$$
A(x,y,z)=\frac{1}{2}(-y,x,0),\quad (x,y,z)\in\R^3,
$$
which in fact fulfills the requirement of Theorems~\ref{main-BBM} and \ref{main} that $A$ is Lipschitz.
Furthermore, in the spirit of \cite{bre}, as a byproduct of Theorems~\ref{main-BBM} and \ref{main},  for $u \in L^2(\R^N)$,  if we have
	$$
	\lim_{n \to +\infty}  \iint_{\R^{2N}} \frac{|\Psi_u (x, y) -  \Psi_u(x, x)|^2}{|x - y|^2} \rho_n (|x - y|) \, dx \, dy= 0 
	$$
	or
	$$
	\lim_{\delta\searrow 0} J_\delta(u) =0,
	$$
	then 
	\begin{equation*}
		\begin{cases}
			\nabla \Re u=- A \Im u, &\\
			\nabla \Im u= A\Re u,
		\end{cases}
	\end{equation*}
	namely the direction of $\nabla \Re u,\nabla \Im u$ is that of the 
	magnetic potential $A$.\ In the particular case $A=0$, this implies that $u$ is a constant function.  
	
	\vskip3pt
	\noindent
The $L^p$ versions of the above mentioned results are  given in Sections~\ref{sect-BBM} and \ref{sect-H1A}. 
In addition to these results, we also discuss the convergence almost everywhere and the convergence in $L^1$ of  
the quantities appearing in Theorems~\ref{main-BBM}  and \ref{main} in Section~\ref{sect-pointwise}.


\medskip The paper is organized as follows. The proof of Theorems~\ref{main-BBM} and \ref{main} are given  in Sections~\ref{sect-BBM} and ~\ref{sect-H1A} respectively. The convergence almost everywhere and the convergence in $L^1$ are investigated in Section~\ref{sect-pointwise}.

\section{Proof of Theorem~\ref{main-BBM} and its $L^p$ version} \label{sect-BBM}

The proof of Theorem~\ref{main-BBM} can be derived from a few lemmas which we present below. The first one is on \eqref{main-BBM-3}.

\begin{lemma} [Upper bound]\label{lem-L1}
Let $A: \R^N \to \R^N$ be Lipschitz  and let $\{\rho_n\}_{n\in\N}$ be a sequence of nonnegative radial mollifiers. We have, for all $u \in H^1_A(\R^N)$, 
\begin{multline*}
\iint_{\R^{2N}} \frac{|\Psi_u (x, y) -  \Psi_u(x, x)|^2}{|x - y|^2} \rho_n (|x - y|) \, dx \, dy \\
\le 2  |\mathbb{S}^{N-1}| \int_{\R^N} |\nabla u - \i A(x) u|^2 \, dx + 2 |\mathbb{S}^{N-1}| \big(2+ \|\nabla A \|_{L^\infty(\R^N)}^2 \big)  \int_{\R^N} |u|^2 \, dx. 
\end{multline*}
\end{lemma}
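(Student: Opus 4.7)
The plan is to bound $|\Psi_u(x,x+h)-u(x)|$ via the fundamental theorem of calculus applied to a judicious interpolating path, and then to split the outer $h$-integration into a short-range piece $\{|h|<1\}$ and a long-range piece $\{|h|\ge 1\}$.

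For $x,h\in\R^N$, I introduce the path $g(t):=e^{-\i t\, h\cdot A(x+h/2)}u(x+th)$ for $t\in[0,1]$, which satisfies $g(0)=u(x)$ and $g(1)=\Psi_u(x,x+h)$. A direct differentiation gives
\[
g'(t)=e^{-\i t\, h\cdot A(x+h/2)}\,h\cdot\bigl[\nabla u(x+th)-\i A(x+h/2)u(x+th)\bigr].
\]
Writing $A(x+h/2)=A(x+th)+[A(x+h/2)-A(x+th)]$ and using the Lipschitz estimate $|A(x+h/2)-A(x+th)|\le|t-1/2|\,|h|\,\|\nabla A\|_{L^\infty(\R^N)}$, we deduce pointwise
\[
|g'(t)|^2\le 2|h|^2\bigl|\nabla u(x+th)-\i A(x+th)u(x+th)\bigr|^2+\tfrac{1}{2}|h|^4\|\nabla A\|_{L^\infty(\R^N)}^2\,|u(x+th)|^2,
\]
and Jensen's inequality yields $|\Psi_u(x,x+h)-u(x)|^2\le\int_0^1|g'(t)|^2\,dt$.

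Dividing by $|h|^2$, multiplying by $\rho_n(|h|)$, and integrating in $(x,h)$: on the short-range region $\{|h|<1\}$ the factor $|h|^4|h|^{-2}\le 1$, so swapping the $t$- and $x$-integrals by Fubini and using the translation invariance $x\mapsto x+th$ bounds that contribution by $2|\mathbb{S}^{N-1}|\int_{\R^N}|\nabla u-\i A u|^2\,dx+\tfrac{1}{2}|\mathbb{S}^{N-1}|\|\nabla A\|_{L^\infty(\R^N)}^2\int_{\R^N}|u|^2\,dx$, where $\int_{\R^N}\rho_n(|h|)\,dh=|\mathbb{S}^{N-1}|$ by \eqref{cond-kern}. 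On $\{|h|\ge 1\}$ the FTC bound is useless; we instead use the trivial $|\Psi_u(x,x+h)-u(x)|^2\le 2|u(x+h)|^2+2|u(x)|^2$ (since $|\Psi_u(x,x+h)|=|u(x+h)|$) together with $|h|^{-2}\le 1$, which bounds that piece by $4|\mathbb{S}^{N-1}|\|u\|_{L^2(\R^N)}^2$. Summing the two contributions yields the stated inequality, in fact with slightly smaller numerical constants on the $|u|^2$ term than written.

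The main obstacle is that the mollifier conditions \eqref{cond-kern} provide no uniform control of $\int_{\R^N}|h|^2\rho_n(|h|)\,dh$, which is precisely why the splitting at $|h|=1$ is forced upon us: it trades the dangerous quartic growth produced by the FTC bound for the trivial long-range decay coming from $u\in L^2$. The derivative computation is immediate for $u\in C_c^\infty(\R^N)$; the density of $C_c^\infty$ in $H^1_A(\R^N)$ (valid when $A$ is Lipschitz) together with the continuity of both sides in the $H^1_A$ norm extends the bound to all $u\in H^1_A(\R^N)$.
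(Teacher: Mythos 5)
Your proof is correct and follows essentially the same route as the paper's: reduce to smooth compactly supported $u$ by density, split at $|x-y|=1$, bound the far range by the trivial estimate $|\Psi_u(x,y)-\Psi_u(x,x)|\le |u(x)|+|u(y)|$, and treat the near range by the fundamental theorem of calculus along the segment plus Jensen, Fubini, translation invariance, and $\int_{\R^N}\rho_n(|z|)\,dz=|\mathbb{S}^{N-1}|$. The only (harmless) difference is that you freeze the phase coefficient $A(x+h/2)$ along your interpolating path and use only the Lipschitz bound, whereas the paper differentiates $\Psi_u(x,\cdot)$ in $y$ directly; your constants come out slightly better, and the passage from smooth $u$ to general $u\in H^1_A(\R^N)$, which you justify by ``continuity of both sides,'' is done in the paper via Fatou's lemma (lower semicontinuity), which is the cleaner formulation of the same step.
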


\begin{proof} Since $C^\infty_c(\R^N)$ is dense in $H^1_A(\R^N)$ (cf.\ \cite[Theorem 7.22]{LL}), using Fatou's lemma, without loss of generality, one might assume that $u \in C^1_c(\R^N)$. Recall that
\begin{equation}\label{integral-rho-n}
	\int_{\R^N}\rho_n(|z|)dz=|\mathbb{S}^{N-1}|\int_0^\infty \rho_n(r)r^{N-1} \, dr =|\mathbb{S}^{N-1}|. 
\end{equation}
	 Since 
\begin{multline*}
\mathop{\iint_{\R^{2N}}}_{\{|x-y| \ge 1\}} \frac{|\Psi_u (x, y) -  \Psi_u(x, x)|^2}{|x - y|^2} \rho_n (|x - y|) \, dx \, dy \\[6pt]
\le 2 \iint_{\R^{2N}} \big(|u(y)|^2 + |u(x)|^2\big) \rho_n (|x - y|) \, dx \, dy  \leq 4 |\mathbb{S}^{N-1}| \int_{\R^N} |u|^2 \, dx, 
\end{multline*}
it suffices to prove that 
\begin{multline}\label{L1-part1}
\mathop{\iint_{\R^{2N}}}_{\{|x-y| < 1 \}} \frac{|\Psi_u (x, y) -  \Psi_u(x, x)|^2}{|x - y|^2} \rho_n (|x - y|) \, dx \, dy \\
\le 2  |\mathbb{S}^{N-1}| \left(\int_{\R^N} |\nabla u - \i A(x) u|^2 \, dx + \|\nabla A \|_{L^\infty(\R^N)}^2 \int_{\R^N} |u|^2 \, dx \right). 
\end{multline}
For a.e. $x,y \in \R^N$, we have
\begin{align*}
\frac{\partial \Psi_u (x, y)}{\partial y}  &= e^{\i (x - y)\cdot A \left( \frac{x + y}{2} \right)} \nabla u(y)- \i \left\{ A\Big( \frac{x + y}{2} \Big) + \frac{1}{2} (y-x) \cdot \nabla A \Big(\frac{x + y}{2} \Big)\right\} \times \\
& \times e^{\i (x - y)\cdot A \left( \frac{x + y}{2} \right)} u(y).  \notag
\end{align*}
It follows that 
\begin{equation}\label{p1-0}
\Big|\frac{\partial \Psi_u(x, y)}{\partial y} \Big| \le |\nabla u(y) - \i A(y) u(y)| + \Big| A\Big( \frac{x + y}{2} \Big)  - A(y)  \Big| |u(y)|+ \frac{1}{2} |y - x| \Big|\nabla A \Big( \frac{x + y}{2}\Big) \Big| |u(y)|.  
\end{equation}
This implies 
\begin{equation*}
\Big| \frac{\partial \Psi_u(x, y)}{\partial y}  \Big| \le |\nabla u(y) - \i A(y) u(y)| + \|\nabla A \|_{L^\infty(\R^N)} |x-y| |u(y)|, 
\end{equation*}
which yields, for $x, y \in \R^N$ with $|x-y| < 1$,  
\begin{align}\label{L1-part1-2}
\frac{|\Psi_u(x, y) - \Psi_u(x, x)|^2}{|x - y|^2} \le &  2 \int_0^1 \big|\nabla u \big(t y + (1-t)x \big)   - \i A \big(t y + (1-t)x \big)   u \big(t y + (1-t)x \big)  \big|^2 \, dt   \nonumber\\[6pt] 
& + 2 \|\nabla A \|_{L^\infty(\R^N)}^2  \int_0^1 \big|u \big(t y + (1-t)x \big) \big|^2 \, dt.  
\end{align}
Since, for $f \in L^2(\R^N)$, in light of \eqref{cond-kern} and \eqref{integral-rho-n},  we get
\begin{multline*}
\int_{\R^N} \int_{\R^N}  \int_0^1 \big|f\big(t y + (1 - t) x \big) \big|^2  \rho_n(|x -y|)  \, dt \, dx \, dy   \\[6pt]
= \int_{\R^N} |f(x)|^2 \, dx \int_{\R^N} \rho_n(|z|) \, dz = |\mathbb{S}^{N-1}|\int_{\R^N} |f(x)|^2 \, dx, 
\end{multline*}
we then derive from \eqref{L1-part1-2} that 
\begin{multline*}
\mathop{\iint_{\R^{2N}}}_{\{|x-y| < 1 \}} \frac{|\Psi_u (x, y) -  \Psi_u(x, x)|^2}{|x - y|^2} \rho_n (|x - y|) \, dx \, dy \\[6pt]
\le 2  |\mathbb{S}^{N-1}| \int_{\R^N} |\nabla u(y) - \i A(y) u(y)|^2  \, dy + 2 |\mathbb{S}^{N-1}| \|\nabla A \|_{L^\infty(\R^N)}^2 \int_{\R^N } |u(y)|^2 \, dy,   
\end{multline*}
which is \eqref{L1-part1}.
\end{proof}

We next establish the following result which is used in the proof of \eqref{main-BBM-2} and in the proof of Theorem~\ref{main}. 

\begin{lemma} \label{lem-L2} Let $u \in C^2(\R^N)$, $A: \R^N \to \R^N$ be Lipschitz,  
	and let $\{\rho_n\}_{n\in\N}$ be a sequence of nonnegative radial mollifiers.  Then 
	\begin{equation}
	\label{stimasit}
		\liminf_{n \to +\infty } \iint_{\R^{2N}} \frac{|\Psi_u (x, y) -  \Psi_u(x, x)|^2}{|x - y|^2} \rho_n (|x - y|) \, dx \, dy \ge 2 Q_N \int_{\R^N} |\nabla u - \i A(x) u|^2 \, dx. 
	\end{equation}
	 Moreover, for any $(\varepsilon_n)  \searrow 0$,  there holds
	\begin{equation}\label{stimasit2}
		\liminf_{n \to + \infty } \iint_{\R^{2N}} \frac{|\Psi_u (x, y) -  \Psi_u(x, x)|^{2+\varepsilon_n}}{|x - y|^{2+\varepsilon_n}}  \rho_n(|x-y|)\, dx \, dy \ge 2 Q_N \int_{\R^N} |\nabla u - \i A(x) u|^2 \, dx.
	\end{equation}
\end{lemma}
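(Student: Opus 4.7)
The plan is to prove \eqref{stimasit} via a first-order Taylor expansion of $\Psi_u$ at $h:=y-x=0$, and then adapt the argument to \eqref{stimasit2}. By a standard truncation/approximation I may reduce to $u\in C^2_c(\R^N)$, so that $\|u\|_{C^2}$, $\|A\|_{L^\infty(K)}$ and $\|\nabla A\|_{L^\infty(\R^N)}$ are all finite on a compact neighborhood $K$ of $\supp u$, and the complex vector field $V(x):=\nabla u(x)-\i A(x)u(x)$ is bounded with compact support.

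Changing variables $h=y-x$ in the double integral, the key observation is that direct differentiation gives $\partial_{h_j}\Psi_u(x,x+h)\big|_{h=0}=\partial_j u(x)-\i A_j(x)u(x)=V_j(x)$. Hence, using only one derivative of $A$, the first-order Taylor expansion yields
\[
\Psi_u(x,x+h)-u(x)=h\cdot V(x)+R(x,h),\qquad |R(x,h)|\le C_u|h|^2\ \text{for}\ |h|\le 1,
\]
for a constant $C_u$ depending on $u$ and $A$.

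For the main term, using polar coordinates $h=r\sigma$, the homogeneity identity $\int_{\mathbb S^{N-1}}|\sigma\cdot v|^2\,d\sigma=2Q_N|v|^2$ for every $v\in\C^N$ (which follows from \eqref{valoreK} applied separately to $\Re v$ and $\Im v$), and $\int_0^\infty\rho_n(r)r^{N-1}\,dr=1$, I obtain the exact identity
\[
\int_{\R^N}\frac{|h\cdot V(x)|^2}{|h|^2}\rho_n(|h|)\,dh=2Q_N|V(x)|^2.
\]
To absorb the remainder, fix $\delta\in(0,1)$, restrict the inner integral to $\{|h|<\delta\}$ by positivity, and use
\[
|h\cdot V+R|^2\ge |h\cdot V|^2-2|h\cdot V||R|\ge |h\cdot V|^2-2C_u|V(x)||h|^3.
\]
The correction integrates to at most $2C_u\delta|\mathbb S^{N-1}|\|V\|_{L^1}$, while the tail of the main term over $\{|h|\ge\delta\}$ vanishes as $n\to\infty$ by the second assumption in \eqref{cond-kern}. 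Letting first $n\to\infty$ and then $\delta\to 0$ delivers \eqref{stimasit}.

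For \eqref{stimasit2} I run the same scheme with exponent $p_n:=2+\varepsilon_n$, replacing the quadratic bound by the convexity-type inequality $|a+b|^{p_n}\ge|a|^{p_n}-p_n|a|^{p_n-1}|b|$. After integration in polar coordinates the leading term becomes $\int_{\mathbb S^{N-1}}|\sigma\cdot V(x)|^{p_n}\,d\sigma$, which converges pointwise in $x$ to $2Q_N|V(x)|^2$ by dominated convergence on the sphere (since $V$ is bounded on $\supp u$ and $|\sigma\cdot V|^{\varepsilon_n}\to 1$ wherever $\sigma\cdot V\neq 0$), and then in $x$ as well, $V$ being compactly supported and bounded; the remainders acquire only extra factors $|h|^{\varepsilon_n}\le 1$ for $|h|<\delta<1$ and are handled as before. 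The main obstacle is keeping all estimates uniform in $n$ while the exponent $p_n$ varies, and this is precisely where the local uniform bounds on $V$ coming from $u\in C^2_c$ and the concentration of $\rho_n$ near zero combine.
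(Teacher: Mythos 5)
Your core computation---the first-order expansion $\Psi_u(x,x+h)-u(x)=h\cdot(\nabla u(x)-\i A(x)u(x))+O(|h|^2)$ (valid for Lipschitz $A$), polar coordinates, the identity $\int_{\mathbb S^{N-1}}|\sigma\cdot v|^2\,d\sigma=2Q_N|v|^2$, and the restriction of the inner integral to $|h|<\delta$ followed by $n\to\infty$, $\delta\to0$---is sound and is essentially the computation in the paper. Your direct treatment of the exponent $2+\varepsilon_n$ (via $|a+b|^{p}\ge|a|^{p}-p|a|^{p-1}|b|$ and dominated convergence on the sphere) is a legitimate alternative to the paper's route, which instead deduces \eqref{stimasit2} from the quadratic case by H\"older's inequality on $B_R\times B_R\cap\{|x-y|<1\}$ together with the fact that $\big(\iint\rho_n\big)^{\varepsilon_n/(2+\varepsilon_n)}\to1$.

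However, the opening step ``by a standard truncation/approximation I may reduce to $u\in C^2_c(\R^N)$'' is a genuine gap. The lemma assumes only $u\in C^2(\R^N)$, with no integrability or decay, while your argument uses $\|V\|_{L^1}<\infty$ (and implicitly $V\in L^2$), which need not hold. No standard reduction delivers this: multiplying by a cutoff gives no comparison between the nonlocal functional of $u$ and that of $\eta u$ (pairs $(x,y)$ with a point outside $\{\eta=1\}$ contribute in an uncontrolled direction), so a lower bound for the truncation says nothing about $u$; and a density argument for a $\liminf$ lower bound would require an a priori upper bound on the nonlocal functional of the difference, i.e.\ Lemma~\ref{lem-L1}, which needs $u\in H^1_A(\R^N)$ (in particular $u\in L^2$)---not assumed here, and unavailable exactly where the lemma is used (in Lemma~\ref{lem-R1} it is applied to mollifications $u*\tau_m$ of a general $L^2$ function precisely to prove membership in $H^1_A$, so such a reduction would be circular). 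The repair is the paper's localization: since the integrand is nonnegative, restrict the double integral to $B_R\times B_R$, on which $\Psi_u$ only involves $u|_{B_R}$; your expansion and polar-coordinate computation then run verbatim with all constants finite and give the lower bound $2Q_N\int_{B_{R-1}}|\nabla u-\i A u|^2$ (only points at distance at least $1$, or $\delta$, from $\partial B_R$ see the full sphere of directions), and letting $R\to\infty$ yields \eqref{stimasit}; the same localization fixes your proof of \eqref{stimasit2}.
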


Throughout this paper, for $R>0$, let $B_R$ denote the open ball in $\R^N$ centered at the origin and of radius $R$.

\begin{proof} Fix $R>1$ (arbitrary).  
Using the fact
$$
|e^{\i t} - (1 + \i t)| \le C t^2, \quad\mbox{for $t \in \R$}, 
$$
we have, for $x, y \in B_R$, 
\begin{multline*}
\left| \Psi_u(x, y) - \Big(1 + \i (x - y) \cdot A (y)\Big) u (y)  \right|  \le
\left| \Psi_u(x, y) - \left(1 + \i (x - y) \cdot A \Big(\frac{x+y}{2} \Big) \right) u (y)  \right| \\[6pt]
 + |x - y| \Big| A \Big(\frac{x+y}{2} \Big)  - A(y) \Big| |u(y)|  \le C \| u \|_{C^2(B_R)} (1 +  \| A\|_{W^{1, \infty} (B_R)} )^2  |x-y|^2. 
\end{multline*}
Here and in what follows, $C$ denotes a positive constant.  On the other hand, we obtain, for $x, y \in B_R$,  
\begin{equation*}
\big|u (x) - u (y)  - \nabla u(y) \cdot (x -y)\big| \le C \| u\|_{C^2(B_R)} |x-y|^2. 
\end{equation*}
It follows that  
\begin{multline}\label{L2-part1}
\left| \Big[\Psi_u(x, y) - \Psi_u(x, x) \Big] - \Big(\nabla u(y) -  \i  A  (y) u(y )  \Big) \cdot (y-x) \right| \\[6pt]
\le  C \| u\|_{C^2(B_R)} \big(1 + \| A\|_{W^{1, \infty}(B_R)} \big)^2 |x-y|^2. 
\end{multline}
Since
\begin{equation}\label{L2-part2}
\lim_{n \to + \infty}  \mathop{\iint_{B_R \times B_R}}_{\{|x - y| < 1\}} |x-y|^2 \rho_n(|x - y|) \, dx \, dy = 0, 
\end{equation}
it follows from \eqref{L2-part1} that
\begin{multline*}
\liminf_{n \to + \infty } \mathop{\iint_{B_R \times B_R}}_{\{|x-y| < 1 \}} \frac{|\Psi_u (x, y) -  \Psi_u(x, x)|^2}{|x - y|^2} \rho_n (|x - y|) \, dx \, dy \\[6pt]
\ge \liminf_{n \to + \infty } \mathop{\iint_{B_R \times B_R} }_{\{|x-y| < 1\}} \frac{\big| \big(\nabla u(y) - \i A(y) u(y) \big) \cdot (x-y)\big|^2}{|x-y|^2} \rho_n(|x-y|)\, dx \, dy. 
\end{multline*}
We have, by the definition of $Q_N$,  
\begin{multline}\label{L2-part3}
 \liminf_{n \to + \infty } \mathop{\iint_{B_R \times B_R} }_{ \{ |x-y| < 1\}} \frac{\big| \big(\nabla u(y) - \i A(y) u(y) \big) \cdot (x-y)\big|^2}{|x-y|^2} \rho_n(|x-y|)\, dx \, dy \\[6pt]
 \ge 2 Q_N \int_{B_{R-1}} |\nabla u(y) - \i A(y) u(y) |^2 \, dy. 
\end{multline}
By the arbitrariness of $R>1$ we get
\begin{equation*}
\liminf_{n \to + \infty } \mathop{\iint_{\R^{2N}}}_{\{|x-y| < 1 \}} \frac{|\Psi_u (x, y) -  \Psi_u(x, x)|^2}{|x - y|^2} \rho_n (|x - y|) \, dx \, dy \ge 2 Q_N \int_{\R^N} |\nabla u - \i A(x) u|^2 \, dx,  
\end{equation*}
which implies \eqref{stimasit}. 

Assertion \eqref{stimasit2} can be derived as follows. We have, by H\"older's inequality, 
\begin{multline*}
\mathop{\iint_{B_R \times B_R}}_{\{|x-y| < 1\}} \frac{|\Psi_u (x, y) -  \Psi_u(x, x)|^{2}}{|x - y|^{2}}  \rho_n(|x-y|) dx \, dy 
\\[6pt]
\le 
\left( \mathop{\iint_{B_R \times B_R}}_{\{|x-y| < 1 \}} \frac{|\Psi_u (x, y) -  \Psi_u(x, x)|^{2+\varepsilon_n}}{|x - y|^{2+\varepsilon_n}}  \rho_n(|x-y|) dx \, dy  \right)^{\frac{2}{2 + \varepsilon_n}} \left( \mathop{\iint_{B_R \times B_R}}_{\{|x-y| < 1\}}  \rho_n(|x-y|) \, dx \, dy  \right)^{\frac{\varepsilon_n}{ 2 + \varepsilon_n}}.  
\end{multline*}
Since, for every $R>0$, there holds
$$
\lim_{n \to + \infty}  \left( \mathop{\iint_{B_R \times B_R}}_{\{|x-y| \le 1\}}  \rho_n(|x-y|) \, dx \, dy  \right)^{\frac{\varepsilon_n}{ 2 + \varepsilon_n}} =1, 
$$
we get \eqref{stimasit2} from \eqref{L2-part3} and the arbitrariness of $R>1$.
\end{proof}

We are ready to prove \eqref{main-BBM-2}. 

\begin{lemma}[Limit formula] \label{lem-L3} Let $A: \R^N \to \R^N$ be  Lipschitz  
	and let $\{\rho_n\}_{n\in\N}$ be a sequence of nonnegative radial mollifiers.  Then, for $u \in H^1_A(\R^N)$,  
	\begin{equation*}
		\lim_{n \to +\infty } \iint_{\R^{2N}} \frac{|\Psi_u (x, y) -  \Psi_u(x, x)|^2}{|x - y|^2} \rho_n (|x - y|) \, dx \, dy = 2 Q_N \int_{\R^N} |\nabla u - \i A(x) u|^2 \, dx. 
	\end{equation*}
\end{lemma}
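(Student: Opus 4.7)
The plan is to combine Lemma~\ref{lem-L2}, which already gives the $\liminf$ half of the identity, with a matching $\limsup$ inequality for $u \in C^\infty_c(\R^N)$, and then to extend the equality to $H^1_A(\R^N)$ by a density argument. Throughout, write
\begin{equation*}
T_n(v) := \iint_{\R^{2N}} \frac{|\Psi_v(x,y) - \Psi_v(x,x)|^2}{|x-y|^2} \rho_n(|x-y|) \, dx\,dy.
\end{equation*}

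\textbf{Step~1 (limsup for $u \in C^\infty_c(\R^N)$).} Choose $R$ with $\supp u \subset B_{R-1}$ and split $T_n(u)$ at $|x-y|=1$. The contribution of $\{|x-y|\ge 1\}$ is bounded by $4|\mathbb{S}^{N-1}|\|u\|_{L^2}^2 \int_1^\infty \rho_n(r) r^{N-1}\,dr$ and vanishes by \eqref{cond-kern}. On $\{|x-y|<1\}$ the integrand is supported in $B_R\times B_R$, so I would invoke the second-order expansion established inside the proof of Lemma~\ref{lem-L2},
\begin{equation*}
\Psi_u(x,y) - \Psi_u(x,x) = V(y) \cdot (y-x) + E(x,y),\quad V(y) := \nabla u(y) - \i A(y) u(y),\quad |E(x,y)| \le C(u,A) |x-y|^2.
\end{equation*}
Expanding the square, the cross and error contributions are controlled by the moments $\int_0^1 r^{N+k}\rho_n(r)\,dr$ for $k=1,2$, each of which vanishes as $n\to+\infty$ by splitting at an arbitrary $\delta>0$, using $r^k r^{N-1} \le \delta^k r^{N-1}$ on $(0,\delta)$ and \eqref{cond-kern} on $(\delta,1)$. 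The principal term, after the change of variable $h = y-x$, reduces to
\begin{equation*}
\int_{\R^N}\int_{|h|<1} \frac{|V(y)\cdot h|^2}{|h|^2}\rho_n(|h|)\,dh\,dy = 2Q_N \|V\|_{L^2(\R^N)}^2 \int_0^1 r^{N-1}\rho_n(r)\,dr,
\end{equation*}
using the identity $\int_{\mathbb{S}^{N-1}}|V\cdot\omega|^2 d\sigma = 2Q_N|V|^2$ valid for any $V\in\C^N$ (decompose $V$ into real and imaginary parts and apply \eqref{valoreK}); the remaining $r$-integral tends to $1$ by \eqref{cond-kern}.

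\textbf{Step~2 (density).} Since $v\mapsto \Psi_v(x,y) - \Psi_v(x,x)$ is $\C$-linear in $v$, Minkowski's inequality makes $T_n^{1/2}$ a seminorm, so $|T_n(u)^{1/2}-T_n(u_k)^{1/2}| \le T_n(u-u_k)^{1/2}$ for any sequence $u_k$. By Lemma~\ref{lem-L1}, the right-hand side is at most $C(1+\|\nabla A\|_{L^\infty(\R^N)}) \|u-u_k\|_{H^1_A(\R^N)}$, uniformly in $n$. Choosing $u_k\in C^\infty_c(\R^N)$ with $u_k\to u$ in $H^1_A(\R^N)$ (possible by \cite[Thm.~7.22]{LL}), Step~1 together with Lemma~\ref{lem-L2} yield the identity for each $u_k$; letting $n\to+\infty$ first and $k\to+\infty$ second, and using the continuity of $v\mapsto \|\nabla v - \i A v\|_{L^2(\R^N)}$, delivers the identity for $u$.

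\textbf{Main obstacle.} The delicate step is Step~1: the pointwise Taylor bound is strictly local (it requires $u\in C^2$ and $A\in W^{1,\infty}$ on a fixed ball), so the compact-support reduction has to be arranged before invoking it; and the vanishing of the higher moments $\int_0^1 r^{N+k}\rho_n(r)\,dr$ must be coaxed out of the very weak concentration assumption \eqref{cond-kern} alone, rather than from any explicit decay rate on $\rho_n$. Step~2 is essentially routine once the seminorm structure of $T_n^{1/2}$ is noticed.
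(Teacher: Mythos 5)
Your proposal is correct and follows essentially the same route as the paper: reduce to compactly supported smooth $u$ via the uniform bound of Lemma~\ref{lem-L1} and density (your seminorm argument is just the paper's implicit mechanism spelled out), then obtain the matching limsup from the Taylor expansion \eqref{L2-part1}, the vanishing of higher moments of $\rho_n$ as in \eqref{L2-part2}, the tail condition in \eqref{cond-kern} for $\{|x-y|\ge 1\}$, and the support localization \eqref{L3-part4}, with the liminf supplied by Lemma~\ref{lem-L2}. The only cosmetic difference is that you expand the square and control the cross term by the first moment, where the paper compares limsups directly.
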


\begin{proof} By Lemma~\ref{lem-L1} and the density of $C^\infty_c(\R^N)$ in $H^1_A(\R^N)$, one might assume that $u \in C^2_c(\R^N)$. From Lemma~\ref{lem-L2}, it suffices to prove that, for  $u \in C^2_c(\R^N)$, 
\begin{equation}\label{L3-part0}
\limsup_{n \to +\infty } \iint_{\R^{2N}} \frac{|\Psi_u (x, y) -  \Psi_u(x, x)|^2}{|x - y|^2} \rho_n (|x - y|) \, dx \, dy \leq 2 Q_N \int_{\R^N} |\nabla u - \i A(x) u|^2 \, dx. 
\end{equation}
Fix $R>4$ such that $\supp u \subset B_{R/2}$.  Using \eqref{L2-part1} and \eqref{L2-part2}, 
one derives that
\begin{multline*}
 \limsup_{n \to + \infty } \mathop{\iint_{B_R \times B_R} }_{ \{ |x-y| < 1\}}\frac{|\Psi_u (x, y) -  \Psi_u(x, x)|^2}{|x - y|^2} \rho_n (|x - y|) \, dx \, dy \\[6pt]\le \limsup_{n \to + \infty} \mathop{\iint_{B_R \times B_R} }_{\{|x-y| < 1\}} \frac{\big| \big(\nabla u(y) - \i A(y) u(y) \big) \cdot (x-y)\big|^2}{|x-y|^2} \rho_n(|x-y|)\, dx \, dy, 
\end{multline*}
which yields 
\begin{multline}\label{L3-part2}
 \limsup_{n \to + \infty } \mathop{\iint_{B_R \times B_R} }_{ \{ |x-y| < 1\}}\frac{|\Psi_u (x, y) -  \Psi_u(x, x)|^2}{|x - y|^2} \rho_n (|x - y|) \, dx \, dy \\[6pt]  \le  2 Q_N \int_{\R^N} |\nabla u(y) - \i A(y) u(y) |^2 \, dy. 
\end{multline}
On the other hand, we have 
\begin{multline}\label{L3-part3}
\limsup_{n \to + \infty }\mathop{\iint_{\R^{2N}}}_{ \{ |x-y| \ge 1\}} \frac{|\Psi_u (x, y) -  \Psi_u(x, x)|^2}{|x - y|^2} \rho_n (|x - y|) \, dx \, dy   \\[6pt] \le  
\limsup_{n \to + \infty }\mathop{\iint_{\R^{2N}}}_{ \{ |x-y| \ge 1\}} 2 \big(|u(x)|^2 + |u(y)|^2\big)\rho_n(|x-y|)\, dx \, dy=0,
\end{multline}
and the fact that 
\begin{equation}\label{L3-part4}
\mbox{ if }  (x, y) \not \in B_R \times B_R    \mbox{ and }   |x - y | < 1 \mbox{ then }|\Psi_u (x, y) -  \Psi_u(x, x)| = 0, 
\end{equation}
by the choice of $R$. 
Combining \eqref{L3-part2}, \eqref{L3-part3}, and \eqref{L3-part4} yields \eqref{L3-part0}. 
 \end{proof}

The following result is about uniform bounds for the integrals in \eqref{main-BBM-1}. 

\begin{lemma}
	\label{lem-R1} 
	Let  $A: \R^N \to \R^N$ be Lipschitz  and let $\{\rho_n\}_{n\in\N}$ be a sequence of 
	nonnegative radial mollifiers. Then $u \in H^1_A(\R^N)$ if $u \in L^2(\R^N)$ and 
\begin{equation}\label{R1-assumption}
\sup_{n\in\N}  \iint_{\R^{2N}} \frac{|\Psi_u (x, y) -  \Psi_u(x, x)|^2}{|x - y|^2} \rho_n (|x - y|) \, dx \, dy < + \infty. 
\end{equation}
\end{lemma}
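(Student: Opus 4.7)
The idea is to reduce to the smooth case already handled by Lemma~\ref{lem-L2} via mollification. Let $\eta_\varepsilon \in C^\infty_c(\R^N)$ be a standard nonnegative mollifier with $\supp \eta_\varepsilon \subset B_\varepsilon$ and $\int \eta_\varepsilon = 1$, and set $u_\varepsilon := u * \eta_\varepsilon$. Then $u_\varepsilon \in C^\infty(\R^N) \cap L^2(\R^N)$ and $u_\varepsilon \to u$ in $L^2(\R^N)$ as $\varepsilon \searrow 0$.

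The core step is to transfer the bound \eqref{R1-assumption} from $u$ to $u_\varepsilon$, uniformly in $\varepsilon$. I would write
$$
\Psi_{u_\varepsilon}(x,y) - \Psi_{u_\varepsilon}(x,x) = \int_{\R^N} \eta_\varepsilon(z) \bigl[\Psi_u(x-z, y-z) - \Psi_u(x-z, x-z)\bigr]\,dz + E_\varepsilon(x,y),
$$
where the error term $E_\varepsilon(x,y)$ accounts for the discrepancy between the magnetic phase evaluated at $(x+y)/2$ inside $\Psi_{u_\varepsilon}$ and the one at $(x+y)/2 - z$ appearing inside $\Psi_u(x-z,y-z)$. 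For the first piece, Jensen's inequality against the probability measure $\eta_\varepsilon(z)\,dz$ together with the change of variables $(x,y)\mapsto(x-z,y-z)$ (which preserves $|x-y|$) bound its weighted $L^2$-integral against $\rho_n(|x-y|)/|x-y|^2$ by $\iint |\Psi_u(X,Y)-\Psi_u(X,X)|^2 |X-Y|^{-2} \rho_n(|X-Y|)\,dX\,dY$, which is uniformly bounded by \eqref{R1-assumption}. For $E_\varepsilon$, using $|1-e^{\i t}|\le|t|$, the Lipschitz bound on $A$ and $|z|\le\varepsilon$ on $\supp\eta_\varepsilon$, one obtains the pointwise estimate $|E_\varepsilon(x,y)| \le \|\nabla A\|_{L^\infty(\R^N)} \varepsilon |x-y| (\eta_\varepsilon * |u|)(y)$; combined with $\int \rho_n(|h|)\,dh = |\mathbb{S}^{N-1}|$ and Young's inequality this yields
$$
\iint_{\R^{2N}} \frac{|E_\varepsilon(x,y)|^2}{|x-y|^2}\,\rho_n(|x-y|)\,dx\,dy \le |\mathbb{S}^{N-1}| \, \|\nabla A\|_{L^\infty(\R^N)}^2 \, \varepsilon^2 \, \|u\|_{L^2(\R^N)}^2,
$$
so that the full expression for $u_\varepsilon$ is bounded uniformly in $n$ and $\varepsilon$.

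Since $u_\varepsilon \in C^\infty(\R^N)$, the lower bound \eqref{stimasit} of Lemma~\ref{lem-L2} then provides a uniform-in-$\varepsilon$ bound on $\|\nabla u_\varepsilon - \i A u_\varepsilon\|_{L^2(\R^N)}$. Extracting a weakly convergent subsequence in $L^2(\R^N;\C^N)$ and identifying its weak limit with $\nabla u - \i A u$ in the distributional sense (using $u_\varepsilon \to u$ in $L^2(\R^N)$ and the local boundedness of the Lipschitz field $A$ to pass to the limit in duality against test fields $\varphi \in C^\infty_c(\R^N;\C^N)$), one concludes that $\nabla u - \i A u \in L^2(\R^N;\C^N)$, that is, $u \in H^1_A(\R^N)$. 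The main difficulty is the error term $E_\varepsilon$: since the phase $e^{\i(x-y)\cdot A((x+y)/2)}$ depends jointly on $x$ and $y$, the convolution by $\eta_\varepsilon$ does not commute cleanly with $\Psi_u(\cdot,\cdot)$, and controlling $E_\varepsilon$ is precisely where the Lipschitz hypothesis on $A$ enters.
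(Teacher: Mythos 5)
Your proof is correct and follows essentially the same route as the paper: mollify $u$, transfer the bound \eqref{R1-assumption} to the mollification via Jensen's inequality, a translation change of variables, and the Lipschitz control $|e^{\i t}-1|\le C|t|$ on the phase error, then invoke the lower bound of Lemma~\ref{lem-L2} and pass to the limit in the mollification parameter. The only cosmetic differences are that the paper takes mollifiers supported in the fixed ball $B_1$ (so the error term is merely bounded uniformly rather than $O(\varepsilon^2)$) and leaves the final weak-compactness identification of $\nabla u - \i A u$ implicit, which you spell out.
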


\begin{proof} Let $\{ \tau_m \}$ be a sequence of  nonnegative mollifiers  with $\supp \tau_m \subset B_1$ which is normalized by the condition $\int_{\R^N} \tau_m(x) \, dx  = 1$. Set 
$$
u_m = u * \tau_m. 
$$
We estimate 
\begin{equation*}
\iint_{\R^{2N}} \frac{|\Psi_{u_m} (x, y) -  \Psi_{u_m}(x, x)|^2}{|x - y|^2} \rho_n (|x - y|) \, dx \, dy.  
\end{equation*}
We have 
\begin{multline*}
\iint_{\R^{2N}} \frac{|e^{\i(x-y)\cdot A\left(\frac{x+y}{2}\right)} u_m(y) -  u_m(x)|^2}{|x - y|^2} \rho_n (|x - y|) \, dx \, dy  \\[6pt]
=  \iint_{\R^{2N}} \frac{\Big|\int_{\R^N} \Big(e^{\i(x-y)\cdot A\left(\frac{x+y}{2}\right)} u(y - z) -  u(x - z) \Big) \tau_m(z)  \, dz \Big|^2}{|x - y|^2} \rho_n (|x - y|) \, dx \, dy. 
\end{multline*}
By the change of variables $y' = y -z$ and $x'= x - z$ and using the inequality $|a+b|^2 \le 2 (|a|^2 + |b|^2)$ for all $a,b\in\C$ and applying Jensen's inequality, we deduce that   
\begin{align}\label{R1-part1}
\iint_{\R^{2N}} &  \frac{|\Psi_{u_m} (x, y) -   \Psi_{u_m}(x, x)|^2}{|x - y|^2}  \rho_n (|x - y|) \, dx \, dy \nonumber\\[6pt]
\le & 2 \iint_{\R^{2N}} \frac{|\Psi_{u} (x, y) -  \Psi_{u}(x, x)|^2}{|x - y|^2} \rho_n (|x - y|) \, dx \, dy \nonumber \\[6pt] + 
&  2 \int_{\R^N} \int_{\R^N} \int_{\R^N}  \frac{\Big|e^{\i(x-y)\cdot A\left(\frac{x+y}{2} + z\right)} - e^{\i(x-y)\cdot A\left(\frac{x+y}{2} \right)} \Big|^2  |u(y)|^2  }{|x - y|^2}  \tau_m(z)  \rho_n (|x - y|)\, dz  \, dx \, dy. 
\end{align}
Since, for $t \in \R$,  
$$
|e^{\i t} -1| \le C |t|,
$$
it follows that, for all $x, \, y, \, z \in\R^{N}$, 
\begin{multline*}
\Big|e^{\i(x-y)\cdot A\left(\frac{x+y}{2} + z\right)} - e^{\i(x-y)\cdot A\left(\frac{x+y}{2} \right)} \Big|  = \Big| e^{\i(x-y)\cdot \left( A\left(\frac{x+y}{2} + z\right) - A \left( \frac{x+y}{2}\right) \right)} -1 \Big| \\[6pt]
\le C  \|\nabla A\|_{L^\infty(\R^N)} |x -y| |z| \le C  |x -y| |z| . 
\end{multline*}
Here and in what follows in this proof,  $C$ denotes some positive constant independent of $m$ and $n$. 
Taking into account the fact that $\supp \tau_m \subset B_1$, we obtain 
\begin{multline}\label{R1-part2}
\int_{\R^N} \int_{\R^N} \int_{\R^N}  \frac{\Big|e^{\i(x-y)\cdot A\left(\frac{x+y}{2} + z\right)} - e^{\i(x-y)\cdot A\left(\frac{x+y}{2} \right)} \Big|^2  |u(y)|^2  }{|x - y|^2}  \tau_m(z)  \rho_n (|x - y|)\, dz  \, dx \, dy \\[6pt]
 \le  \int_{\R^N} \int_{\R^N} \int_{\R^N} C |u(y)|^2 \tau_m(z) \rho_n(|x-y|) \, dz \, dx \, dy \le C. 
\end{multline}
Combining \eqref{R1-assumption}, \eqref{R1-part1}, \eqref{R1-part2} yields 
\begin{equation}\label{R1-part3}
\iint_{\R^{2N}}\frac{|\Psi_{u_m} (x, y) -  \Psi_{u_m}(x, x)|^2}{|x - y|^2} \rho_n (|x - y|) \, dx \, dy  \le C. 
\end{equation}
On the other hand, by Lemma~\ref{lem-L2} we have
\begin{equation}\label{R1-part4}
\liminf_{n \to \infty }\iint_{\R^{2N}} \frac{|\Psi_{u_m} (x, y) -  \Psi_{u_m}(x, x)|^2}{|x - y|^2} \rho_n (|x - y|) \, dx \, dy  \ge 2 Q_N \int_{\R^N} |\nabla u_m - \i A(x) u_m|^2 \, dx. 
\end{equation}
The conclusion now immediately follows from \eqref{R1-part3} and \eqref{R1-part4} after letting $m \to + \infty$.
\end{proof}

We are ready to give the proof of Theorem~\ref{main-BBM}.

\medskip 
\noindent 
{\bf Proof of Theorem~\ref{main-BBM}.} Theorem~\ref{main-BBM} is a direct consequence of Lemmas \ref{lem-L1}, \ref{lem-L3} and \ref{lem-R1}. \qed

\begin{remark} \label{rem-Hs} \rm 
	Let $\{\rho_n\}_{n \in \N}$ be a sequence of  non-negative radial functions such that 
$$
\int_0^1 \rho_n(r) r^{N-1} \, d r = 1,  \quad \lim_{n \to + \infty} \int_\delta^1 \rho_n(r) r^{N-1} \, d r = 0,\quad \mbox{for every $\delta > 0$}, 
$$
and
$$
\lim_{n \to + \infty} \int_1^\infty \rho_n(r) r^{N-3} \, d r = 0. 
$$
Theorem~\ref{main-BBM} then holds for such a sequence $\{ \rho_n\}_{n\in\N}$ provided that  the constant $2$ in \eqref{main-BBM-3}
is replaced by an appropriate positive constant $C$ independent of $u$. This follows by taking into account the fact that, for $u \in L^2(\R^N)$,   
\begin{multline*}
\limsup_{n \to + \infty}\mathop{\iint_{\R^{2N}}}_{\{|x - y| \ge 1\}}   \frac{|\Psi_{u} (x, y) -   \Psi_{u}(x, x)|^2}{|x - y|^2}  \rho_n (|x - y|) \, dx \, dy  \\[6pt]
\le 2 \limsup_{n \to + \infty}\mathop{\iint_{\R^{2N}}}_{\{|x - y| \ge 1\}}  \big(|u(x)|^2 + |u(y)|^2 \big)  \rho_n (|x - y|) |x-y|^{-2} \, dx \, dy=0. 
\end{multline*}
For example, this applies to the radial sequence
$$
\rho_n(r) = 2 (1 - s_n) r^{2 -2s_n - N},\quad \mbox{for $r>0$}, 
$$
which provides a characterization of $H^1_A(\R^N)$ and yields
$$
\lim_{n \to + \infty}\, (1-s_n)\iint_{\R^{2N}}\frac{|u(x)-e^{\i (x-y)\cdot A\left(\frac{x+y}{2}\right)}u(y)|^2}{|x-y|^{N+2s_n}}dxdy
=2 Q_N  \int_{\R^N} |\nabla u - \i A(x) u|^2 \, dx.
$$
\end{remark}  

	
\medskip 
\noindent
Consider now the space $(\mathbb{C}^n, |\cdot|_{p})$ ($n \ge 1$), endowed with the norm
\begin{equation*}
	|z|_p:=\big(|(\Re z_1,\ldots, \Re z_n)|^p+|(\Im z_1,\ldots, \Im z_n)|^p\big)^{1/p},
\end{equation*}
where $|\cdot|$ is the Euclidean norm of $\R^n$ and
$\Re a$, $\Im a$ denote the real and imaginary parts of $a\in\mathbb{C}$
respectively. 
We emphasize that this is not related to the $p$-norm in $\R^n$. In what follows, we use this notation with $n = N$ and $n=1$. 
Notice that $|z|_p=|z|$ whenever $z\in \R^n$, which makes our next 
statements consistent with the case $A=0$ and $u$ being 
a real valued function. Also $|\cdot|_2=|\cdot|,$ consistently with the previous definition. 
Define, for some ${\boldsymbol \omega} \in \mathbb{S}^{N-1}$, 	
\begin{equation}
		\label{Q-Np}
		Q_{N, p}:=\frac{1}{p}\int_{{\mathbb S}^{N-1}}| {\boldsymbol \omega}\cdot \sigma|_p^{p} \, d \sigma. 
\end{equation}
We have, for $z \in \mathbb{C}^N$, (see \cite{bourg,PSV}), 
\begin{equation}
\label{constant-z}
\int_{\mathbb{S}^{N-1}} |z \cdot \sigma|_p^p \, d \sigma = \int_{\mathbb{S}^{N-1}} |\Re z \cdot \sigma|^p \, d \sigma +  \int_{\mathbb{S}^{N-1}} |\Im z \cdot \sigma|^p \, d \sigma \\
= |\Re z|^p p Q_{N, p} +  |\Im z|^p p Q_{N, p}  =  |z|_p^p p Q_{N, p}. 
\end{equation}

\medskip
\noindent
Using the same approach and technique, one can prove the following $L^p$ version of Theorem~\ref{main-BBM}. 

\begin{theorem} \label{main-BBM-p} Let  $p\in (1,+\infty)$,  
$A:\R^N\to\R^N$ be Lipschitz, and let $\{\rho_n\}_{n\in\N}$ be a sequence of nonnegative radial mollifiers. 
Then $u \in W^{1, p}_A(\R^N)$ if and only if $u \in L^p(\R^N)$ and 
\begin{equation*}
\sup_{n \in \N}  \iint_{\R^{2N}} \frac{|\Psi_u (x, y) -  \Psi_u(x, x)|_p^p}{|x - y|^p} \rho_n (|x - y|) \, dx \, dy < + \infty. 
\end{equation*}
Moreover, for $u \in W^{1, p}_A(\R^N)$, we have
\begin{equation*}
\lim_{n \to + \infty} \iint_{\R^{2N}} \frac{|\Psi_u (x, y) -  \Psi_u(x, x)|_p^p}{|x - y|^p} \rho_n (|x - y|) \, dx \, dy = p Q_{N, p}  \int_{\R^N} |\nabla u - \i A(x) u|_p^p \, dx
\end{equation*}
and 
\begin{multline}\label{last-BBM-p}
\iint_{\R^{2N}} \frac{|\Psi_u (x, y) -  \Psi_u(x, x)|_p^p}{|x - y|^p} \rho_n (|x - y|) \, dx \, dy \\
\le C_{N, p} \int_{\R^N} |\nabla u - \i A(x) u|_p^p \, dx + C_{N, p} \big(2+ \|\nabla A \|_{L^\infty(\R^N)}^p \big)  \int_{\R^N} |u|_p^p \, dx, 
\end{multline}
for some positive constant $C_{N, p}$ depending only on $N$ and $p$. 
\end{theorem}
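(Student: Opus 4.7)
The plan is to mirror the four-step decomposition used in the proof of Theorem~\ref{main-BBM} (upper bound, liminf on smooth test functions, limit identity, and converse via mollification), substituting $|\cdot|^2$ with $|\cdot|_p^p$ throughout. The new ingredients replacing those of the $p=2$ case are: the convexity inequality $|a+b|_p^p \le 2^{p-1}(|a|_p^p + |b|_p^p)$, Jensen's inequality applied to the convex function $|\cdot|_p^p$, and the identity \eqref{constant-z} which produces the angular constant $p Q_{N, p}$.

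For the upper bound (analog of Lemma~\ref{lem-L1}), the derivative computation \eqref{p1-0} is unchanged and yields
$$
\Big|\frac{\partial \Psi_u(x,y)}{\partial y}\Big|_p \le C_p \bigl( |\nabla u(y) - \i A(y) u(y)|_p + \|\nabla A\|_{L^\infty(\R^N)} |x-y| |u(y)|_p \bigr).
$$
When $|x-y| < 1$, I would express $\Psi_u(x,y) - \Psi_u(x,x)$ as an integral along the segment from $x$ to $y$ and apply Jensen's inequality in $t \in [0,1]$; when $|x-y| \ge 1$, the bound $|\Psi_u(x,y) - \Psi_u(x,x)|_p^p \le 2^{p-1}(|u(x)|_p^p + |u(y)|_p^p)$ combined with $\int_{\R^N} \rho_n(|z|)\, dz = |\mathbb{S}^{N-1}|$ handles the tail. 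Summing both contributions yields \eqref{last-BBM-p}.

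For the liminf on $C^2_c$ functions (analog of Lemma~\ref{lem-L2}), the Taylor expansion \eqref{L2-part1} and the vanishing property \eqref{L2-part2} are unchanged, so the liminf is bounded below by
$$
\liminf_{n \to +\infty} \iint_{\R^{2N}} \frac{|(\nabla u(y) - \i A(y) u(y)) \cdot (x-y)|_p^p}{|x-y|^p}\, \rho_n(|x-y|)\, dx\, dy.
$$
Polar coordinates in $x$ centered at $y$, together with the mollifier conditions \eqref{cond-kern} and identity \eqref{constant-z} applied to $z = \nabla u(y) - \i A(y) u(y)$, produce the lower bound $p Q_{N, p} \int_{\R^N} |\nabla u - \i A u|_p^p\, dx$. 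The matching limit formula for $u \in W^{1, p}_A(\R^N)$ then follows by combining this with the upper bound above and the density of $C^\infty_c(\R^N)$ in $W^{1, p}_A(\R^N)$, as in Lemma~\ref{lem-L3}.

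For the converse (analog of Lemma~\ref{lem-R1}), I would set $u_m = u * \tau_m$, perform the change of variables $y' = y-z$ and $x' = x-z$, and apply Jensen with respect to $\tau_m(z)\, dz$ to dominate the nonlocal quantity for $u_m$ by $2^{p-1}$ times the nonlocal quantity for $u$ (which is bounded by hypothesis) plus an error term. The error involves $|e^{\i(x-y)\cdot [A((x+y)/2 + z) - A((x+y)/2)]} - 1|^p \le \|\nabla A\|_{L^\infty(\R^N)}^p |x-y|^p |z|^p$; after dividing by $|x-y|^p$, Fubini together with $\supp \tau_m \subset B_1$ bound the error by $C \|u\|_{L^p(\R^N)}^p$ uniformly in $m$ and $n$. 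Taking the liminf in $n$ (smoothing makes $u_m$ smooth enough to invoke Step 2), then letting $m \to \infty$, concludes the proof. The main obstacle lies in this last step: since multiplication by a unimodular complex number is not an isometry for $|\cdot|_p$ when $p \ne 2$, one must systematically absorb $p$-dependent multiplicative constants when comparing $|cu|_p$ with $|c|\,|u|_p$ for $c, u \in \C$, and verify that all error terms remain uniformly controlled.
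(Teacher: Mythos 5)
Your proposal is correct and follows essentially the same route the paper indicates for Theorem~\ref{main-BBM-p}: repeat the four lemmas of Section~\ref{sect-BBM} with $|\cdot|^2$ replaced by $|\cdot|_p^p$, using a convexity inequality $|a+b|_p^p\le C(|a|_p^p+|b|_p^p)$, Jensen's inequality, and the identity \eqref{constant-z} to produce the constant $pQ_{N,p}$. Your closing concern about unimodular factors failing to be $|\cdot|_p$-isometries is harmless for the reason your argument implicitly exploits: such factors only appear in upper bounds and error terms, where $p$-dependent constants are permitted, while the leading Taylor term $\big(\nabla u(y)-\i A(y)u(y)\big)\cdot(y-x)$ in the limit formula carries no unimodular prefactor.
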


\begin{remark} \rm Assume that $C$ is a positive constant such that, for all $a, b \in \C$, 
$$
|a + b|_p^p \le C (|a|_p^p + |b|_p^p). 
$$
Then assertion \eqref{last-BBM-p} of Theorem~\ref{main-BBM-p} holds with $C_{N, p} =  |\mathbb{S}^{N-1}|C$.   
\end{remark}

\section{Proof of Theorem~\ref{main} and its $L^p$ version} \label{sect-H1A}


Let us set, for $\sigma \in \mathbb{S}^{N-1}$, 
$$
\M_{\sigma}(g, x):=\sup_{t>0}\frac{1}{t}\int_{0}^{t}\left|g(x + s \sigma)\right|ds. 
$$
and denote $\M_{e_N}$ by $\M_N$, $e_N:=(0,\ldots,0,1)$.
We have the following result which is a direct consequence of the theory of maximal functions, see e.g., \cite[Theorem 1, page 5]{Stein}.

\begin{lemma}[Maximal function estimate]
	\label{maxprop}
There exists a universal constant  $C>0$ such that, for all $\sigma \in \mathbb{S}^{N-1}$, 
$$
\int_{\R^N} |\M_{\sigma}(g, x)|^2dx\leq C \int_{\R^N} |g|^2dx, \quad\text{for all $g\in L^2(\R^N)$}.
$$	
\end{lemma}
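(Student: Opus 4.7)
The proof will be a standard reduction of an $N$-dimensional directional maximal inequality to the classical one-dimensional Hardy--Littlewood maximal inequality, via rotation and Fubini's theorem. The plan is to treat $\sigma = e_N$ first and then exploit the orthogonal invariance of the Lebesgue measure on $\R^N$ to deduce the general case with the same constant.

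First I would fix $\sigma \in \mathbb{S}^{N-1}$ and choose an orthogonal transformation $R \in O(N)$ with $R \sigma = e_N$. Setting $\tilde g (x) := g(R x)$, a direct change of variables yields $\M_\sigma(g,x) = \M_N(\tilde g, R^{-1} x)$, so by the invariance of $\|\cdot\|_{L^2(\R^N)}$ under rotations it suffices to prove the inequality for the direction $\sigma = e_N$, i.e. the estimate
\begin{equation*}
\int_{\R^N} |\M_N(g,x)|^2 \, dx \le C \int_{\R^N} |g(x)|^2 \, dx.
\end{equation*}

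Next I would write $x = (x', x_N)$ with $x' \in \R^{N-1}$ and $x_N \in \R$, and note that for each fixed $x'$ the quantity $\M_N(g, x', x_N)$ coincides with the one-sided Hardy--Littlewood maximal function at the point $x_N$ of the slice function $g_{x'}(t) := g(x', t)$. By Fubini,
\begin{equation*}
\int_{\R^N} |\M_N(g,x)|^2 \, dx = \int_{\R^{N-1}} \left( \int_{\R} |\M_N(g_{x'}, x_N)|^2 \, dx_N \right) dx'.
\end{equation*}
The inner integral is estimated by the classical one-dimensional $L^2$ bound for the Hardy--Littlewood maximal function (as recalled in \cite[Theorem 1, p.~5]{Stein}), which yields $\int_\R |\M_N(g_{x'}, x_N)|^2 dx_N \le C_1 \int_\R |g_{x'}(t)|^2 dt$ for a universal constant $C_1$. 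Integrating this inequality in $x'$ and applying Fubini once more gives the desired conclusion with $C := C_1$ independent of $\sigma$.

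There is essentially no serious obstacle here: the only point that deserves a brief comment is that $\M_\sigma$ as defined in the paper is the one-sided average $\tfrac{1}{t}\int_0^t |g(x+s\sigma)|\,ds$ rather than the symmetric one, but the $L^2$ estimate for the one-sided maximal function on $\R$ is classical and gives the same universal constant in every direction $\sigma$. The final $C$ thus depends neither on $\sigma$ nor on $g$, which is exactly what the lemma asserts.
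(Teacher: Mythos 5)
Your proof is correct and is essentially the same route as the paper's, which offers no argument beyond citing the classical maximal function theory in \cite[Theorem 1, p.\ 5]{Stein}: your rotation-plus-Fubini reduction to the one-dimensional one-sided Hardy--Littlewood inequality is exactly the standard justification intended there. Only a small bookkeeping remark: with your choice $R\sigma=e_N$ the correct identity is $\M_{\sigma}(g,x)=\M_{N}(g\circ R^{-1},Rx)$ (equivalently, choose $R$ with $Re_N=\sigma$ for the formula you wrote), which does not affect the argument.
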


The following lemma yields an upper bound of $J_\delta(u)$ in terms of the norm of $u$ in $H^1_A(\R^N)$. 

\begin{lemma}[Uniform upper bound]
	\label{lem-variant}
	Let   $A:\R^N\to\R^N$ be Lipschitz and  $u\in H^1_A(\R^N)$.  We have 
\begin{equation*}
\sup_{\delta > 0} J_\delta (u)
\leq C_N  \left( \int_{\R^N}|\nabla u-\i A(x)u|^2\, dx + \big(\|\nabla A\|_{L^\infty(\R^N)}^2 + 1\big) \int_{\R^N} |u|^2  \, dx \right). 
\end{equation*}
\end{lemma}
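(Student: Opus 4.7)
The plan is to mimic the strategy used in \cite{BourNg, nguyen06} for the classical Sobolev characterization, with the derivative bound from the proof of Lemma~\ref{lem-L1} replacing the usual bound on $|\nabla u|$. By density of $C^\infty_c(\R^N)$ in $H^1_A(\R^N)$ and Fatou's lemma, I would first reduce to the case $u \in C^1_c(\R^N)$. Then I would pass to polar coordinates $y = x + r \sigma$, $r>0$, $\sigma \in \mathbb{S}^{N-1}$, write
\begin{equation*}
J_\delta(u) = \int_{\R^N} \int_{\mathbb{S}^{N-1}} \int_0^\infty \mathbb{1}_{\{|\Psi_u(x, x + r \sigma)  - \Psi_u(x,x)| > \delta\}} \frac{\delta^2}{r^{3}}\, dr\, d\sigma\, dx,
\end{equation*}
and split the radial integral into the regions $r \le 1$ and $r > 1$.

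For the far region $r > 1$, I would use the trivial pointwise bound $|\Psi_u(x,y) - \Psi_u(x,x)| \le |u(x)| + |u(y)|$ together with the elementary inequality
\begin{equation*}
\mathbb{1}_{\{|u(x)| + |u(x+r\sigma)| > \delta\}} \le \frac{2}{\delta^2}\bigl(|u(x)|^2 + |u(x+r\sigma)|^2\bigr),
\end{equation*}
which cancels the $\delta^2$ in the numerator. A translation and Fubini argument, together with the finite integral $\int_1^\infty r^{-3}\, dr$, then yields a contribution bounded by $C_N \int_{\R^N} |u|^2\, dx$.

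The substantive part is the near region $r \le 1$. Starting from \eqref{p1-0} in the proof of Lemma~\ref{lem-L1}, and using $|x-y|\le 1$ to absorb $|x-y|\|\nabla A\|_{L^\infty}|u|$ into $\|\nabla A\|_{L^\infty}|u|$, I get, for $|x-y|\le 1$,
\begin{equation*}
|\Psi_u(x,y) - \Psi_u(x,x)| \le |x-y| \int_0^1 g\bigl(x + t(y-x)\bigr)\, dt, \qquad g := |\nabla u - \i A u| + \|\nabla A\|_{L^\infty(\R^N)}|u|.
\end{equation*}
After the change of variable $s = tr$, the inner average is at most $\M_\sigma(g, x)$. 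Hence $|\Psi_u(x, x + r\sigma) - \Psi_u(x,x)| > \delta$ forces $r > \delta / \M_\sigma(g, x)$, so a direct computation gives
\begin{equation*}
\int_0^1 \mathbb{1}_{\{|\Psi_u(x, x + r\sigma)  - \Psi_u(x,x)| > \delta\}} \frac{\delta^2}{r^{3}}\, dr \le \int_{\delta/\M_\sigma(g,x)}^{\infty} \frac{\delta^2}{r^3}\, dr = \frac{1}{2}\,\M_\sigma(g,x)^2,
\end{equation*}
uniformly in $\delta > 0$ (with the left-hand side vanishing if $\delta > \M_\sigma(g,x)$). Integrating in $x$ and $\sigma$ and invoking Lemma~\ref{maxprop} on each slice, I conclude
\begin{equation*}
\int_{\R^N}\int_{\mathbb{S}^{N-1}} \M_\sigma(g,x)^2\, d\sigma\, dx \le C_N \|g\|_{L^2(\R^N)}^2 \le C_N \bigl( \|\nabla u - \i A u\|_{L^2}^2 + \|\nabla A\|_{L^\infty}^2 \|u\|_{L^2}^2\bigr).
\end{equation*}
Combining the two regions yields the stated bound. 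The main obstacle is the near region, specifically, the correct use of the directional maximal function to convert the level-set characterization of $J_\delta$ into an $L^2$-bound that is uniform in $\delta$; the dependence on $\|\nabla A\|_{L^\infty}^2$ comes naturally from the extra term produced by differentiating $\Psi_u$ in $y$.
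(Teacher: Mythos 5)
Your proof is correct and follows essentially the same route as the paper: reduction to $C^1_c(\R^N)$ by density and Fatou, a near/far splitting, the derivative bound \eqref{p1-0}, polar coordinates, and the directional maximal function estimate of Lemma~\ref{maxprop}. The only (harmless) difference is that you absorb both terms into a single function $g$ and use one maximal function, whereas the paper keeps the two terms separate via the elementary splitting ``$a+b>\delta$ implies $a>\delta/2$ or $b>\delta/2$''; both give the stated bound with the same dependence on $\|\nabla A\|_{L^\infty(\R^N)}$.
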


\begin{proof} 

By density of $C^\infty_c(\R^N)$ in $H^1_A(\R^N)$, using Fatou's lemma, 
we can assume that $u \in C^1_c(\R^N)$. 
For each $\delta>0$, let us define
$$
\A_{\delta}:= \Big\{ (x,y)\in \R^{2N}: |\Psi_u (x,y)-\Psi_u(x,x)| > \delta, \; |x -y| < 1 \Big\}
$$
and
$$
\mathcal{B}_{\delta}:= \Big\{(x,y)\in \R^{2N}: |\Psi_u(x,y)-\Psi_u(x,x)|>\delta, \;  |x - y| \ge 1 \Big\}.$$
We have
$$
\iint_{\R^{2N}} \frac{\delta^2}{|x-y|^{N+2}}{\bf 1}_{\mathcal{B}_{\delta}} \, dx \, dy  \le \iint_{\R^{2N}}  \frac{|\Psi_u(x, y) - \Psi_u(x,x)|^2}{|x-y|^{N+2}} {\bf 1}_{\{|x - y| \ge 1\}} \, dx \, dy. 
$$
Since $|\Psi_u(x, y) - \Psi_u(x, x)| \le |u(x)| + |u(y)|$ and 
$$
\mathop{\iint_{\R^{2N}}}_{\{|x - y| \ge 1 \}}
\frac{|u(x)|^2}{|x-y|^{N+2}}dx \, dy\leq  C_N  \int_{\R^N} |u(x)|^2 \, dx,
$$ 
it follows that 
$$
\iint_{\R^{2N}} \frac{\delta^2}{|x-y|^{N+2}}{\bf 1}_{\mathcal{B}_{\delta}} \, dx \, dy  \le  C_N \int_{\R^N} |u(x)|^2 \, dx. 
$$
We are therefore interested in estimating the integral
$$
\iint_{\A_{\delta}} \frac{\delta^2}{|x-y|^{N+2}} dx \, dy.
$$
Let us now define
$$
\mathcal{X}_\delta:= \Big\{ (x,h,\sigma)\in \R^N \times (0,1)\times \mathbb{S}^{N-1}: |\Psi_u (x,x+h\sigma) - \Psi_u(x,x)|>\delta \Big\}.
$$
Performing the change of variables  
$y= x + h \sigma$, for $h \in (0,1)$ and $\sigma \in \mathbb{S}^{N-1}$,
yields
\begin{equation*}
\iint_{\A_{\delta}} \frac{\delta^2}{|x-y|^{N+2}} dx \, dy = 
\iiint_{\mathcal{X}_\delta} \frac{\delta^2}{h^{3}} \, dh  \, dx  \, d\sigma=
\int_{\mathbb{S}^{N-1}}\iint_{\CC_{\sigma}} \frac{\delta^2}{h^{3}} \, dh  \, dx  \, d\sigma,
\end{equation*}
where $\CC_{\sigma}$ denotes the set
$$
\CC_{\sigma}:= \Big\{ (x,h) \in \R^N \times (0, 1): |\Psi_u (x,x+h\sigma)-\Psi_u (x,x)|>\delta \Big\},
\quad \sigma \in \mathbb{S}^{N-1}. 
$$
Without loss of generality 
it suffices  to prove that, for $\sigma = e_N = (0,\ldots,0,1) \in \mathbb{S}^{N-1}$, 
\begin{equation}\label{directionN-1}
\iint_{\CC_{e_N}}\frac{\delta^2}{h^{3}}dh dx \leq C_N \left( \int_{\R^N}|\nabla u  - \i A(x) u|^2 dx 
+  \|\nabla A \|_{L^\infty(\R^N)}^2  \int_{\R^N} |u|^2\, dx \right).
\end{equation}
We have, by virtue of \eqref{p1-0}, 
\begin{equation}
\label{key_p}
\Big|\Psi(x, x + h e_N) - \Psi(x, x) \Big| \le h \M_{N}( |\nabla u - \i A u|, x) +h^2 \|\nabla A \|_{L^\infty(\R^N)} \M_{N}(|u|, x). 
\end{equation}
Using the fact that  if $a + b > \delta$ then either $a > \delta/2$ or $b > \delta/2$, we derive that 
\begin{align*}
\iint_{\CC_{e_N}}\frac{\delta^2}{h^{3}}dh dx &\le \iint_{ \{h \M_{N}( |\nabla u - \i A u|, x) >  \delta / 2\}} \frac{\delta^2}{h^{3}}\, dh \, dx  
+ \iint_{ \{ h^2 \| \nabla A\|_{L^\infty(\R^N)} \M_N (|u|, x ) > \delta/ 2\}} \frac{\delta^2}{h^{3}} \,dh \,  dx\\
&\le \iint_{\{ h \M_{N}( |\nabla u - \i A u|, x) >  \delta / 2\}} \frac{\delta^2}{h^{3}}\, dh \, dx  
+ \iint_{ \{h \| \nabla A\|_{L^\infty(\R^N)} \M_N (|u|, x ) > \delta/ 2\}} \frac{\delta^2}{h^{3}} \,dh  \, dx, 
\end{align*}
where the last inequality follows recalling that since 
$(x, h) \in \CC_{e_N}$ then $h \in (0, 1)$.
As usual, by using the theory of maximal functions stated in Lemma~\ref{maxprop}, we have 
\begin{equation}\label{part-1}
\iint_{\{ h \M_{N}( |\nabla u - \i A u|, x) >  \delta/2\}} \frac{\delta^2}{h^{3}}dh dx 
\le C_N \int_{\R^N} |\nabla u - \i A (x) u|^2 \, dx
\end{equation}
and 
\begin{equation}\label{part-2}
 \iint_{ \{h  \| \nabla A\|_{L^\infty(\R^N)}  \M_N (|u|, x )> \delta/2 \}} \frac{\delta^2}{h^{3}} \,dh dx \le C_N \| \nabla A\|_{L^\infty}^2 \int_{\R^N}|u|^2 \, dx. 
\end{equation}
 Assertion \eqref{directionN-1} follows from \eqref{part-1} and \eqref{part-2}. The proof is complete.
\end{proof}

We next establish 

\begin{lemma}[Limit formula] 
	\label{formul}  
	Let   $A:\R^N\to\R^N$ be Lipschitz	
and 	$u \in H^1_A(\R^N)$.	Then 
	\begin{equation*}
	\lim_{\delta\searrow 0} J_\delta (u)= 	Q_{N}\int_{\R^N}|\nabla u-\i A(x)u|^2\, dx,
	\end{equation*}
	where $Q_N$ is the constant defined in \eqref{valoreK}.
\end{lemma}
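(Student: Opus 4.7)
My plan is to first reduce to smooth compactly supported $u$ via a density argument based on the linearity of $u \mapsto \Psi_u$ and Lemma~\ref{lem-variant}, and then compute the limit explicitly for $u \in C^2_c(\R^N)$ by replacing $\Psi_u(x,y)-\Psi_u(x,x)$ with its linearization $V(y)\cdot(y-x)$, where $V(y):=\nabla u(y)-\i A(y)u(y)$.

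\textbf{Step 1 (reduction to $C^2_c(\R^N)$).} Since $\Psi_u$ depends linearly on $u$, one has $\Psi_u(x,y)-\Psi_u(x,x) = [\Psi_v(x,y)-\Psi_v(x,x)] + [\Psi_{u-v}(x,y)-\Psi_{u-v}(x,x)]$ for every $v \in H^1_A(\R^N)$. The elementary set inclusion
\[
\{|a+b|>\delta\}\subseteq\{|a|>(1-\varepsilon)\delta\}\cup\{|b|>\varepsilon\delta\},\quad \varepsilon\in(0,1),
\]
combined with a rescaling in $\delta$, yields
\[
J_\delta(u) \le \frac{1}{(1-\varepsilon)^2}J_{(1-\varepsilon)\delta}(v) + \frac{1}{\varepsilon^2}J_{\varepsilon\delta}(u-v),
\]
and a symmetric inequality with the roles of $u$ and $v$ swapped. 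By Lemma~\ref{lem-variant}, $\sup_\delta J_\delta(u-v)\le C\|u-v\|_{H^1_A(\R^N)}^2$. Approximating $u$ by $v\in C^2_c(\R^N)$ in $H^1_A(\R^N)$ and sending first $v\to u$ and then $\varepsilon\to 0^+$ reduces both the $\limsup$ and $\liminf$ of $J_\delta(u)$ to the smooth compactly supported case.

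\textbf{Step 2 (exact computation of the linearized functional).} Fix $u\in C^2_c(\R^N)$ and set $V(y):=\nabla u(y)-\i A(y)u(y)$. Introduce
\[
\widetilde J_\delta(u) := \iint_{\R^{2N}} \frac{\delta^2}{|x-y|^{N+2}}\mathbf{1}_{\{|V(y)\cdot(y-x)|>\delta\}} \, dx\, dy.
\]
The change of variables $z=x-y$ followed by polar coordinates $z=r\sigma$ reduces the $r$-integral to $\int_{\delta/|V(y)\cdot\sigma|}^\infty \delta^2 r^{-3}\, dr = |V(y)\cdot\sigma|^2/2$, whence
\[
\widetilde J_\delta(u) = \frac{1}{2}\int_{\R^N}\int_{\mathbb{S}^{N-1}} |V(y)\cdot\sigma|^2\, d\sigma\, dy = Q_N \int_{\R^N}|V|^2\, dy,
\]
by applying the definition \eqref{valoreK} of $Q_N$ separately to $\Re V$ and $\Im V$ (cf.\ \eqref{constant-z}). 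Remarkably, $\widetilde J_\delta(u)$ is independent of $\delta$.

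\textbf{Step 3 (linearization error, main obstacle).} It remains to show $\lim_{\delta\to 0}|J_\delta(u)-\widetilde J_\delta(u)|=0$ for $u\in C^2_c(\R^N)$. Estimate \eqref{L2-part1} gives $|[\Psi_u(x,y)-\Psi_u(x,x)] - V(y)\cdot(y-x)| \le C|x-y|^2$ for $x,y$ in a neighborhood of $\supp u$. The contribution to $J_\delta(u)$ from $|x-y|\ge 1$ is $O(\delta^2)$ and vanishes. For $|x-y|<1$, the symmetric difference of the two defining events lies inside $\{(x,y):\bigl||V(y)\cdot(y-x)|-\delta\bigr|\le C|x-y|^2\}$; after the polar change of variables, this corresponds to an $r$-interval of width $O(\delta^2/|V(y)\cdot\sigma|^3)$ centered near $r\approx\delta/|V(y)\cdot\sigma|$ with integrand of size $\delta^2/r^3$, contributing $O(\delta)$ after integration over the compact $y$-support. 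The main obstacle is making this estimate uniform in $\sigma$: for directions nearly orthogonal to $V(y)$ the linearization degenerates, but there $|\Psi_u(x,y)-\Psi_u(x,x)|>\delta$ forces $|x-y|\gtrsim\sqrt{\delta/C}$, so those configurations lie in a set of small spherical measure handled by a separate, softer estimate. Combining Steps 2 and 3 yields the limit formula for smooth $u$, and Step 1 then gives it for all $u\in H^1_A(\R^N)$.
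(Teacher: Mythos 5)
Your proposal is correct in outline, and Step 1 is exactly the paper's reduction (the same splitting $J_\delta(u)\le (1-\varepsilon)^{-2}J_{(1-\varepsilon)\delta}(v)+\varepsilon^{-2}J_{\varepsilon\delta}(u-v)$ combined with Lemma~\ref{lem-variant} and density of smooth compactly supported functions). Where you genuinely diverge is the computation for smooth $u$: the paper does not compare $J_\delta$ with a linearized functional. Instead it rescales, writing $y=x+\delta h\sigma$, so that the superlevel condition becomes $\big|\tfrac{\Psi_u(x,x+\delta h\sigma)-\Psi_u(x,x)}{\delta h}\big|h>1$; as $\delta\searrow 0$ the indicator of this set converges a.e.\ to the indicator of $\{|(\nabla u-\i Au)(x)\cdot\sigma|\,h>1\}$, a dominating function is supplied by the directional maximal-function bound \eqref{key_p}, and dominated convergence plus the exact $h$-integral $\int \mathbf{1}_{\{|V\cdot\sigma|h>1\}}h^{-3}dh=\tfrac12|V\cdot\sigma|^2$ and \eqref{constant-z} give the claim \eqref{Claim0}. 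That route needs only $u\in C^1_c$ and entirely avoids the case analysis near directions orthogonal to $V(y)$, since the degeneracy is absorbed into the pointwise limit of indicators. Your route — computing $\widetilde J_\delta(u)=Q_N\int|V|^2$ exactly (and $\delta$-independently) and estimating the symmetric difference of the events via \eqref{L2-part1} — is also viable and has the merit of being quantitative (it yields an $O(\delta)$ error for $u\in C^2_c$), but it costs one more derivative and the degenerate regime requires more care than your sketch suggests: the set of directions with $|V(y)\cdot\sigma|\lesssim\sqrt\delta$ need not have small spherical measure (e.g.\ where $V(y)=0$, or for $y$ outside $\supp u$ with $x$ near the boundary of the support, every direction is degenerate), so the correct argument there is not a small-measure bound but exactly the other mechanism you name: $|\Psi_u(x,y)-\Psi_u(x,x)|>\delta$ forces $|x-y|\ge\sqrt{\delta/C}$, whence $\int_{\sqrt{\delta/C}}^{\infty}\delta^2 r^{-3}\,dr\le C\delta$ uniformly in $(y,\sigma)$, integrated over the compact support; similarly the portion of the symmetric difference with $r$ far from $\delta/|V(y)\cdot\sigma|$ must be folded into this soft estimate rather than into the $O(\delta^2/|V\cdot\sigma|^3)$ interval-width bound. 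With those details written out, your proof is complete and independent of the paper's dominated-convergence argument.
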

\begin{proof}  
	By virtue of Lemma~\ref{lem-variant}, for every $\delta>0$ and all $w \in H^1_A(\R^N)$,
	we have
\begin{equation}\label{Limit formula-1}
J_{\delta}(w)
\leq C_N  \left( \int_{\R^N}|\nabla w -\i A(x) w |^2\, dx + \big(\|\nabla A\|_{L^\infty(\R^N)}^2 + 1 \big) \int_{\R^N} |w|^2  \, dx \right).
\end{equation}
Since 
$$
|\Psi_u(x, y) - \Psi_u(x, x)| \le |\Psi_v(x, y) - \Psi_v(x, x)| + |\Psi_{u-v}(x, y) - \Psi_{u-v}(x, x)|, 
$$
it follows that, 
for every $\varepsilon\in (0,1)$, 
\begin{multline*}
J_\delta(u) \leq \iint_{\left\{|\Psi_{v}(x,y)-\Psi_{v}(x,x)|>(1-\varepsilon)\delta\right\}}\frac{\delta^2}{|x-y|^{N+2}} dx \, dy \\[6pt]
+\iint_{\left\{|\Psi_{u-v}(x,y)-\Psi_{u-v}(x,x)|>\varepsilon\delta\right\}}\frac{\delta^2}{|x-y|^{N+2}} dx \, dy. 
\end{multline*}
This implies, for $\varepsilon \in (0, 1) $ and $u, \, v \in H^1_A(\R^N)$,   
\begin{equation}\label{Limit formula-2}
J_\delta(u) \le  (1-\varepsilon)^{-2}J_{(1-\varepsilon)\delta}(v)+ \varepsilon^{-2}J_{\varepsilon\delta}(u-v).
\end{equation}
From \eqref{Limit formula-1} and \eqref{Limit formula-2}, we derive that,  for $u, u_n \in H^1_A(\R^N)$ and $\eps \in (0, 1)$, 
\begin{multline}\label{lem-formula-part1}
J_{\delta}(u) - (1-\eps)^{-2} J_{(1-\eps) \delta} (u_n) \\[6pt]
\le \eps^{-2} C_N  \left( \int_{\R^N}|\nabla (u - u_n) -\i A(x) (u -u_n) |^2\, dx + \big(\|\nabla A\|_{L^\infty(\R^N)}^2 + 1\big) \int_{\R^N} |u-u_n|^2  \, dx \right)
\end{multline}
and 
\begin{multline}\label{lem-formula-part2}
(1 - \eps)^2 J_{\delta/ (1 - \eps)}(u_n) - J_{ \delta} (u) \\[6pt] \le \eps^{-2} C_N  \left( \int_{\R^N}|\nabla (u - u_n) -\i A(x) (u -u_n) |^2\, dx + \big(\|\nabla A\|_{L^\infty(\R^N)}^2 + 1\big) \int_{\R^N} |u-u_n|^2  \, dx \right). 
\end{multline}
Since $C^1_{c}(\R^N)$ is dense in $H^1_A(\R^N)$, from \eqref{lem-formula-part1} and \eqref{lem-formula-part2}, it suffices to prove 
the assertion for $u \in C^1_{c}(\R^N)$. This fact is assumed from now on.  

	Let $R>0$ be such that $\supp u \subset B_{R/2}$. We claim that, for every $\sigma \in \mathbb{S}^{N-1}$, there holds
	\begin{equation}
	\label{Claim0}
	\lim_{\delta \searrow 0} \iint_{\left\{(x,h)\in B_R \times(0,\infty): \Big|\tfrac{\Psi_u(x,x+\delta h \sigma) -\Psi_u(x,x)}{\delta h} \Big|h >1\right\}}\dfrac{1}{h^3} dh dx 
	= \dfrac{1}{2} \, \int_{\R^N} |(\nabla u - \i A u)\cdot \sigma|^2 dx.
	\end{equation}
	Without loss of generality, we can assume $\sigma = e_N \in \mathbb{S}^{N-1}$.
	Then, we aim to prove that
	\begin{equation*}
	\lim_{\delta \searrow 0} \iint_{\left\{(x,h)\in B_R\times(0,\infty): \left|\tfrac{\Psi_u(x,x+\delta h e_N) -\Psi_u(x,x)}{\delta h} \right|h >1\right\}}\dfrac{1}{h^3} dh dx= \dfrac{1}{2} \, \int_{\R^N} \left|\dfrac{\partial u}{\partial y_N}(x) -\i A_N(x)u(x)\right|^2 dx,
	\end{equation*}
	where $A_N$ denotes the $N$-th component of $A$. To this end, we consider the sets
	\begin{align*}
	& \mathcal{C}_{e_N}(x',\delta) := \Big\{ (x_N,h) \in \R \times (0,\infty):  \left| \dfrac{\Psi_u(x,x+\delta h e_N) - \Psi_u(x,x)}{\delta h} \right| h >1 \Big\},   \\
	& \mathcal{E}(x') := \Big\{ (x_N,h) \in \R \times (0,\infty):  \left| \dfrac{\partial \Psi_u}{\partial y_N}(x,x)\right| h >1 \Big\}, \\
	& \mathcal{F}(x') := \Big\{(x_N,h) \in \R \times (0,\infty):  h \M_{N}(|\nabla u - \i A u|,x) + h^2 \|\nabla A\|_{L^\infty(\R^N)} \M_{N}(|u|,x) >1\Big\}. 
	\end{align*}
	Therefore, we obtain $\chi_{\mathcal{C}_{e_N}(x',\delta)} (x_N,h) \leq \chi_{\mathcal{F}(x')}(x_N,h)$ for a.e.\ $(x,h) \in B_R \times (0,\infty)$ (by \eqref{key_p} in the proof of Lemma~\ref{lem-variant}) and
	\begin{equation*}
	\int_{B_R}\int_{0}^{\infty} \dfrac{1}{h^3} \chi_{\mathcal{F}(x')}(x_N,h) \, dh \, dx \leq 
	\mathcal{I}_1 + \mathcal{I}_2,
	\end{equation*}
	where we have set
	\begin{align*}
	\mathcal{I}_1&:=\iint_{\big\{(x,h)\in B_R \times(0,\infty):\,  \M_{N}(|\nabla u - \i A u|,x) h > 1/2\big\}}\dfrac{1}{h^3} \, dh \, dx, \\
	\mathcal{I}_2&:=\iint_{\big\{(x,h)\in B_R \times(0,\infty):\,  h^2 \|\nabla A\|_{L^\infty(\R^N)}\M_{N}(|u|, x)> 1/2 \big\}}\dfrac{1}{h^3}\, dh \,  dx, 
	\end{align*}
and we have denoted $\chi$ the characteristic function.   We have, by the theory of maximal functions, 
	\begin{equation*}
	\mathcal{I}_1 \leq C \, \int_{\R^N}|\nabla u -\i A(x) u|^2 dx,
	\end{equation*}
	and,  by a straightforward computation, 
\begin{equation*}
	\mathcal{I}_2 \leq C \| \nabla A \|_{L^\infty(\R^N)} \| u\|_{L^\infty(\R^N)} |B_R|. 
	\end{equation*}
	The validity of Claim \eqref{Claim0} with $\sigma = e_N$ now follows from Dominated Convergence theorem since
	$$
	\lim_{\delta \searrow 0}\chi_{\mathcal{C}_{e_N}(x',\delta)} (x_N,h) = \chi_{\mathcal{E}(x')}(x_N,h), \quad \textrm{for a.e. } (x,h) \in B_R \times (0,\infty), 
	$$
and, by a direct computation, 
	$$
	\int_{B_R}\int_0^\infty
	\chi_{\mathcal{E}(x')} (x_N,h)
	\dfrac{1}{h^3} dh  dx=
	\dfrac{1}{2} \int_{B_R} \left|\dfrac{\partial u}{\partial y_N}(x) -\i A_N(x)u(x)\right|^2 dx.
	$$
	Now, performing a change of variables we get
	$$
	\iint_{\{|\Psi_u(x,y)-\Psi_u(x,x)|>\delta,\, \, x \in B_R\}}\frac{\delta^2}{|x-y|^{N+2}} \, dx \, dy = 
	\int_{B_R}\int_{\mathbb{S}^{N-1}}\int_0^\infty
	\chi_{\mathcal{C}_{\sigma}(\delta)} (x,h)
	\dfrac{1}{h^3}\,  dh \, d\sigma \, dx,
	$$
where 
$$
\mathcal{C}_{\sigma}(\delta) := \Big\{ (x,h) \in B_R \times (0,\infty):  \left| \dfrac{\Psi_u(x,x+\delta h \sigma) - \Psi_u(x,x)}{\delta h} \right| h >1 \Big\}.
$$
	Exploiting \eqref{Claim0}, we obtain 
	\begin{equation}\label{Limit-part1}
	\lim_{\delta \searrow 0} \iint_{\{|\Psi_u(x,y)-\Psi_u(x,x)|>\delta, \, x \in B_R\}}\frac{\delta^2}{|x-y|^{N+2}} \, dx \, dy = \dfrac{1}{2} \int_{\mathbb{S}^{N-1}}\int_{B_R} |(\nabla u -\i A u)\cdot \sigma|^2  \, dx \,  d\sigma.
	\end{equation}
On the other hand, since $\supp u \subset B_{R/2}$,  we have 
\begin{multline}\label{Limit-part2}
\lim_{\delta \searrow 0} \iint_{\{|\Psi_u(x,y)-\Psi_u(x,x)|>\delta, \, x \in \R^N \setminus B_R\}}\frac{\delta^2}{|x-y|^{N+2}} \, dx \, dy \\
= \lim_{\delta \searrow 0} \iint_{\{x \in \R^N \setminus B_R, \; y \in B_{R/2}\}}\frac{\delta^2}{|x-y|^{N+2}} \, dx \, dy =0.
\end{multline}
Combining \eqref{Limit-part1} and \eqref{Limit-part2} yields 
\begin{equation*}
	\lim_{\delta \searrow 0} \iint_{\{|\Psi_u(x,y)-\Psi_u(x,x)|>\delta\}}\frac{\delta^2}{|x-y|^{N+2}}\, dx \, dy = \dfrac{1}{2} \int_{\mathbb{S}^{N-1}}\int_{\R^N} |(\nabla u -\i A u)\cdot \sigma|^2 \, dx \, d\sigma.
	\end{equation*}
	In order to conclude, we  notice the following, see \eqref{constant-z}, 
	$$
	\int_{\mathbb{S}^{N-1}}|V \cdot \sigma|^2  \, d\sigma= 2 Q_N |V|^2, \quad \textrm{for any } V \in \C^N,
	$$
	where $Q_N$ is the constant defined in \eqref{valoreK}.
\end{proof}

We next deal with \eqref{bound}. 

\begin{lemma} 
	\label{lem-R2} Let $u \in L^2(\R^N)$ and let  $A: \R^N \to \R^N$ be Lipschitz. Then  $u \in H^1_A(\R^N)$ if
\begin{equation}
	\label{R2-assumption}
\sup_{\delta \in (0, 1)} J_\delta(u)  < + \infty. 
\end{equation}
\end{lemma}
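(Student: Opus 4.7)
Following the structure of Lemma~\ref{lem-R1}, the strategy is to build a smooth compactly-supported approximant $v_m \in C^\infty_c(\R^N) \subset H^1_A(\R^N)$, bound $\|\nabla v_m - \i A v_m\|_{L^2(\R^N)}^2$ uniformly in the regularization parameters via Lemma~\ref{formul}, and then pass to the limit. Since $A$ is only Lipschitz (possibly unbounded), a direct mollification of $u$ need not lie in $H^1_A(\R^N)$. I therefore first apply the contractive truncation $T_k\colon \mathbb{C} \to \mathbb{C}$ defined by $T_k(z) = z$ if $|z| \le k$ and $T_k(z) = kz/|z|$ otherwise: since $T_k$ commutes with multiplication by unimodular complex numbers and is $1$-Lipschitz, one has $|\Psi_{T_k u}(x, y) - T_k u(x)| \le |\Psi_u(x, y) - u(x)|$, hence $J_\delta(T_k u) \le J_\delta(u)$ for every $\delta$, with $M := \sup_{0 < \delta < 1} J_\delta(u)$. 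Then I cut off in space by $\chi_R \in C^\infty_c(\R^N)$ with $\chi_R \equiv 1$ on $B_R$, $\supp \chi_R \subset B_{2R}$, $\|\nabla \chi_R\|_{L^\infty(\R^N)} \le C/R$, and set $v := (T_k u)\chi_R$. The decomposition $\Psi_v - v = \chi_R(x)[\Psi_{T_k u} - T_k u] + (\chi_R(y) - \chi_R(x))\Psi_{T_k u}$ and a $\delta/2$-split of $\{|\Psi_v - v|>\delta\}$ yield $\sup_{0 < \delta < 2} J_\delta(v) \le M' := 4M + C_N \|u\|_{L^2(\R^N)}^2/R^2$. Convolving with a standard mollifier $\tau_m$ supported in $B_{1/m}$ produces $v_m := v \ast \tau_m \in C^\infty_c(\R^N)$, and Lemma~\ref{formul} then gives $\lim_{\delta \searrow 0} J_\delta(v_m) = Q_N\|\nabla v_m - \i A v_m\|_{L^2(\R^N)}^2$.

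The core task is to bound this limit uniformly in $m$. Mimicking the computation leading to \eqref{R1-part1} for $v_m$ in place of $u_m$ gives
$|\Psi_{v_m}(x, y) - v_m(x)| \le \int |\Psi_v(x-z, y-z) - v(x-z)|\,\tau_m(z)\,dz + |x-y|\,H_m(y)$,
where $H_m(y) \le C\|\nabla A\|_{L^\infty(\R^N)}\, m^{-1}(|v| \ast \tau_m)(y)$ and $\|H_m\|_{L^2(\R^N)} \le C\|\nabla A\|_{L^\infty(\R^N)} m^{-1}\|u\|_{L^2(\R^N)}$. Splitting $\{|\Psi_{v_m} - v_m| > \delta\}$ at level $\delta/2$, the contribution where $|x-y|H_m(y)>\delta/2$ is at most $2|\mathbb{S}^{N-1}|\|\nabla A\|_{L^\infty(\R^N)}^2\|u\|_{L^2(\R^N)}^2 /m^2$, uniformly in $\delta$. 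For the main contribution, set $g(x, y, z) := |\Psi_v(x-z, y-z) - v(x-z)|$: the elementary observation that $\int g\,\tau_m\,dz > \delta/2$ forces $\int g\,\mathbf{1}_{\{g > \delta/4\}}\,\tau_m\,dz > \delta/4$ (by splitting $g$ at height $\delta/4$), combined with Markov's inequality applied with exponent $1$ and the translation $x \mapsto x+z$, $y \mapsto y+z$, yields
$I^{(1)}_\delta \le 4\delta \iint \frac{|\Psi_v - v|\,\mathbf{1}_{\{|\Psi_v - v| > \delta/4\}}}{|x-y|^{N+2}}\,dx\,dy$.

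The layer-cake formula converts the right-hand side to $4\delta\bigl[\int_{\delta/4}^\infty \mu_v(t)\,dt + (\delta/4)\,\mu_v(\delta/4)\bigr]$, where $\mu_v(t) := J_t(v)/t^2$. Two bounds make this finite as $\delta \searrow 0$: first, $\mu_v(t) \le M'/t^2$ on $(0, 2)$ (from $\sup_{0<\delta<2} J_\delta(v) \le M'$), giving $\int_{\delta/4}^1 \mu_v(t)\,dt \le M'(4/\delta - 1)$ and $(\delta/4)\mu_v(\delta/4) \le 4M'/\delta$; second, since $|v| \le k$, $\mu_v(t) = 0$ for $t > 2k$ and monotonicity forces $\int_1^{2k}\mu_v(t)\,dt \le C k M'$. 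Collecting, $I^{(1)}_\delta \le 32 M' + O(\delta k M')$, so that $\lim_{\delta \searrow 0} J_\delta(v_m) \le 32 M' + C\|\nabla A\|_{L^\infty(\R^N)}^2\|u\|_{L^2(\R^N)}^2/m^2$ uniformly in $m$. By Lemma~\ref{formul}, $\|\nabla v_m - \i A v_m\|_{L^2(\R^N)}^2 \le 32 M'/Q_N + O(m^{-2})$ uniformly in $m$. Letting successively $m \to \infty$, $R \to \infty$, $k \to \infty$, and exploiting the $L^2$-convergences $v_m \to v$, $(T_k u)\chi_R \to T_k u$, and $T_k u \to u$ together with the weak lower semicontinuity of $w \mapsto \|\nabla w - \i A w\|_{L^2(\R^N)}$ under $L^2$-convergence, we conclude $u \in H^1_A(\R^N)$.

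The main obstacle is the layer-cake step: a naive application of Markov's inequality with exponent $2$ would produce $\iint |\Psi_u - u|^2/|x-y|^{N+2}\,dx\,dy = \int_0^\infty 2 J_t(u)/t\,dt$, which is logarithmically divergent at $0$ already under the hypothesis $\sup_{0<\delta<1} J_\delta(u) \le M$. Using Markov with exponent $1$ instead saves exactly one power of $\delta$, which absorbs the $1/\delta$ arising from $\int_{\delta/4}^1 M'/t^2\,dt$ and leaves a bounded limit as $\delta \searrow 0$. The truncation at height $k$ is indispensable for controlling the tail $\int_1^{2k}\mu_v(t)\,dt$: for $u \in L^2(\R^N) \setminus L^\infty(\R^N)$ the near-diagonal mass $\iint_{|x-y|<1, |\Psi_u - u|>t}|x-y|^{-N-2}\,dx\,dy$ need not decay in $t$, so the hypothesis alone does not ensure $\int_1^\infty \mu_u\,dt < \infty$; the $k$-dependent remainder $O(\delta k)$ vanishes as $\delta \searrow 0$ for each fixed $k$ before $k \to \infty$.
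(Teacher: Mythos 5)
Your proposal is correct, but it follows a genuinely different route from the paper's. The paper's proof has two steps: for $u \in L^2 \cap L^\infty$ it integrates the hypothesis against the weight $\eps\delta^{\eps-1}\,d\delta$ and uses Fubini to convert $\sup_{0<\delta<1}J_\delta(u)<+\infty$ into a uniform bound on the BBM-type functionals with exponent $2+\eps$ and kernel $\eps|x-y|^{-N+\eps}$, and then invokes the liminf estimate \eqref{stimasit2} of Lemma~\ref{lem-L2} to conclude $u\in H^1_A(\R^N)$; the general case is reduced to this one by the same contraction $T_M$ you use, combined with Lemma~\ref{formul}. You never pass through the $(2+\eps)$-functionals: instead you build smooth compactly supported approximants $T_k u\,\chi_R * \tau_m$ (the spatial cutoff being genuinely needed, as you note, since a Lipschitz $A$ may be unbounded and $T_ku*\tau_m$ alone need not lie in $H^1_A$), transfer the uniform $J_\delta$ bound to them by the decomposition analogous to \eqref{R1-part1} plus the exponent-one Chebyshev/layer-cake device, and then apply Lemma~\ref{formul} only to $C^\infty_c$ functions before concluding by lower semicontinuity of $w\mapsto\|\nabla w-\i Aw\|_{L^2}$ along $L^2$-convergent sequences. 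In effect your argument is the $J_\delta$-analogue of the paper's Lemma~\ref{lem-R1}, with the limit formula playing the role that \eqref{stimasit2} plays there; your observation that Markov with exponent $1$ (rather than $2$) avoids the logarithmic divergence, together with the truncation making the tail $\int_1^{2k}\mu_v$ finite, is exactly the right mechanism and your bookkeeping ($\sup_{0<\delta<2}J_\delta(v)\le 4M+C_N\|u\|_{L^2}^2/R^2$, the $O(\delta k)$ remainder, the $m^{-2}$ phase-error term) checks out. What each approach buys: the paper's $\eps$-integration is shorter and recycles Lemma~\ref{lem-L2} wholesale, at the price of first reducing to bounded $u$ and of applying \eqref{stimasit2} (stated for $C^2$ functions) to a merely bounded $u$, a point it leaves implicit; your route keeps every application of the limit formula on smooth compactly supported functions, so it is more self-contained in that respect, at the price of the extra cutoff/mollification layer and the layer-cake computation. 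Two small points you should still write out: the lower semicontinuity step (extract a weak $L^2$ limit of $\nabla v_m-\i A v_m$ and identify it distributionally as $\nabla v-\i Av$, using that $A$ is locally bounded), and the harmless observation that in the cutoff estimate you only ever need $J_{\delta/2}(u)$ with $\delta/2<1$, so the hypothesis on $(0,1)$ indeed yields the bound for all $\delta\in(0,2)$.
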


\begin{proof} The proof is divided into two steps. 

\noindent
{\bf Step 1.} We  assume that $u \in L^2(\R^N)\cap L^\infty(\R^N)$. Set 
\begin{equation*}
L := \sup_{x, y \in \R^N}|\Psi_u(x,y)-\Psi_u(x,x)|.
\end{equation*}
In light of \eqref{R2-assumption}, we obtain 
$$
\int_0^L \eps \delta^{\eps  - 1} J_\delta (u) \, d \delta \le C, 
$$
for some positive constant $C$ independent of $\eps\in (0,1)$.  
By Fubini's theorem and by the definition of $L$, we have
$$
\int_0^L \eps \delta^{\eps  - 1} J_\delta (u) \, d \delta  =  \int_{\R^{2N}} \frac{1}{|x-y|^{N+2}}   \int_0^{|\Psi_u(x,y) - \Psi_u(x, x)|} \eps \delta^{\eps + 1} \, d \delta\, dx \, dy. 
$$
It follows that
$$
\frac{1}{2 + \eps}\iint_{\R^{2N}} \frac{|\Psi_{u} (x, y) -  \Psi_{u}(x, x)|^{2 + \eps}}{|x - y|^{2+\eps}}  \frac{\eps}{|x-y|^{N  -\eps}} \, dx \, dy \le C. 
$$
By virtue of inequality \eqref{stimasit2} of Lemma~\ref{lem-R1}, we have 
\begin{equation*}
\liminf_{\eps \to 0} \iint_{\R^{2N}} \frac{|\Psi_{u} (x, y) -  \Psi_{u}(x, x)|^{2 + \eps}}{|x - y|^{2+\eps}}  \frac{\eps}{|x-y|^{N -\eps}} \, dx \, dy \ge 2 Q_N \int_{\R^N} |\nabla u - \i A(x) u |^2 \, dx,
\end{equation*}
which implies $u  \in H^1_A(\R^N)$. 

\medskip
\noindent
{\bf Step 2.} We consider the general case. For $M>1$, define ${\mathcal T}_M:\C \to \C$ by setting
\begin{equation*}
{\mathcal T}_M(z) := \left\{\begin{array}{cl} z & \mbox{if $|z| \leq M$}, \\[4pt]
M z/ |z| & \mbox{otherwise}, 
\end{array}\right.
\end{equation*}
and denote
\begin{equation*}
u_M := {\mathcal T}_M(u). 
\end{equation*}
Then, we have 
\begin{equation*}
|{\mathcal T}_M(z_1) - {\mathcal T}_M(z_2)| \le  |z_1 - z_2|,\quad 
\mbox{ for all } z_1, z_2 \in \C. 
\end{equation*} 
It follows that 
$$
|\Psi_{u_M} (x, y) - \Psi_{u_M}(x, x)| \le |\Psi_u (x, y) - \Psi_u(x, x)|,\quad 
\mbox{ for all } x, \, y \in \R^N. 
$$
Hence we  obtain 
\begin{equation}\label{R2-part1}
  J_\delta(u_M)
 \le  J_{\delta}(u). 
\end{equation}
Applying the result in Step 1, we have $u_M \in H^1_A(\R^N)$ and hence by Lemma~\ref{formul}, 
\begin{equation}\label{R2-part2}
\lim_{\delta \to 0} J_\delta(u_M) = 2 Q_N \int_{\R^N} |\nabla u_M(x) - \i A(x) u_M(x)|^2 \, dx. 
\end{equation}
Combining \eqref{R2-part1} and \eqref{R2-part2} and letting $M \to + \infty$, we derive that $u \in H^1_A(\R^N)$. The proof is complete. 
\end{proof}

\begin{remark} \rm Similar approach used for $H^1(\R^N)$ is given  in \cite{nguyen06}. 
\end{remark}

\noindent{\bf Proof of Theorem~\ref{main}.} The limit formula stated in Theorem~\ref{main} follows by Lemma~\ref{formul}.
Now, if $u\in H^1_A(\R^N)$, then \eqref{main-estimate} follows from Lemma~\ref{lem-variant}. 
On the contrary, if $u\in L^2(\R^N)$ 
and \eqref{bound} holds, it follows from 
Lemma~\ref{lem-R2} that $u\in H^1_A(\R^N)$.
\qed

\medskip 
\noindent
Given $u$ a measurable complex-valued function,  define, for $1 <  p < + \infty$,  
$$
J_{\delta, p} (u): = \iint_{\{|\Psi_u(x,y)-\Psi_u(x,x)|_p>\delta\}}\frac{\delta^p}{|x-y|^{N+p}}\, dxdy, \quad \mbox{ for } \delta > 0.  
$$

	We have the following $L^p$-version of Theorem~\ref{main}. 

	\begin{theorem}
	\label{main2} Let $ p \in (1,  + \infty)$ and let $A:\R^N\to\R^N$ be Lipschitz. Then $u\in W^{1,p}_A(\R^N)$ if and only if $u \in L^p(\R^N)$ and 
	\begin{equation*}
	\sup_{0<\delta<1} J_{\delta, p} (u) <\infty. 
	\end{equation*}
	Moreover, we have, for $u \in W^{1, p}_A(\R^N)$, 
	\begin{equation*}
	\lim_{\delta\searrow 0} J_{\delta, p} (u) = 	Q_{N,p}\int_{\R^N}|\nabla u-\i A(x)u|_p^p\, dx
	\end{equation*}
	and
	\begin{equation*}
		J_{\delta, p} (u)
\leq C_{N, p}  \left( \int_{\R^N}|\nabla u-\i A(x)u|_p^p\, dx + \big(\|\nabla A\|_{L^\infty(\R^N)}^p + 1 \big) \int_{\R^N} |u|_p^p  \, dx \right),  
	\end{equation*}
	for some positive constant $C_{N, p}$ depending only on $N$ and $p$. 
	\end{theorem}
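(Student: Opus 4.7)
The plan is to follow the three-step blueprint of the proof of Theorem~\ref{main} (upper bound, limit formula, converse), keeping track of the exponent $p$ and invoking $L^p$ versions of the underlying tools. Two observations make this essentially routine: the pointwise bound \eqref{p1-0} on $|\partial_y \Psi_u(x,y)|$ is independent of $p$, so \eqref{key_p} goes through unchanged; and the Hardy--Littlewood maximal operator is bounded on $L^p(\R^N)$ for $p \in (1,\infty)$, which is the $L^p$ analog of Lemma~\ref{maxprop}.

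For the uniform upper bound, I mimic Lemma~\ref{lem-variant}. Split $J_{\delta,p}(u)$ into integrals over $\{|x-y| \ge 1\}$ (bounded by $\|u\|_{L^p}^p$ using $|\Psi_u(x,y) - \Psi_u(x,x)|_p \le C(|u(x)|_p + |u(y)|_p)$ together with integrability of $|x-y|^{-N-p}$ at infinity) and over $\{|x-y| < 1\}$. In the near part, change variables $y = x + h\sigma$ with $h \in (0,1)$, apply \eqref{key_p} raised to the $p$-th power together with the elementary inequality $(a+b)^p \le 2^{p-1}(a^p + b^p)$, and reduce matters to two integrals of the form $\iint h^{-p-1}\delta^p\, \chi_{\{h\, \M_\sigma g(x) > c\delta\}}\, dh\, dx$ with $g = |\nabla u - \i A u|$ or $g = |u|$. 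Evaluating the radial integral $\int_0^\infty h^{-p-1}\delta^p \chi_{\{h \M_\sigma g(x) > c\delta\}}\, dh$ and then integrating in $x$ yields a constant multiple of $\|\M_\sigma g\|_{L^p}^p$, which is dominated by $\|g\|_{L^p}^p$ by the $L^p$ maximal inequality.

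For the limit formula, I argue as in Lemma~\ref{formul}: using the $L^p$ version of the splitting \eqref{Limit formula-2} (with $\eps^{-p}$ in place of $\eps^{-2}$) and the just-proved upper bound, reduce to $u \in C^1_c(\R^N)$ with $\supp u \subset B_{R/2}$. In polar coordinates, $J_{\delta,p}(u)$ becomes an integral of $h^{-p-1}\chi_{\{|\Psi_u(x, x+\delta h\sigma) - \Psi_u(x,x)|_p > \delta\}}$ over $(x, h, \sigma)$. The Taylor expansion underlying \eqref{L2-part1} gives $(\Psi_u(x, x+\delta h\sigma) - \Psi_u(x,x))/(\delta h) \to (\nabla u(x) - \i A(x) u(x)) \cdot \sigma$ a.e., while \eqref{key_p} provides a dominating envelope integrable in $(x,h)$ thanks to the maximal inequality. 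Dominated Convergence combined with identity \eqref{constant-z} then yields
\[
\lim_{\delta \searrow 0} J_{\delta,p}(u) = \int_{\R^N}\int_{\mathbb{S}^{N-1}} \frac{|(\nabla u - \i A u)(x) \cdot \sigma|_p^p}{p}\, d\sigma\, dx = Q_{N,p}\int_{\R^N} |\nabla u - \i A u|_p^p\, dx.
\]

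For the converse, I adapt Lemma~\ref{lem-R2}. The truncation ${\mathcal T}_M$ is $1$-Lipschitz in $|\cdot|_p$, so passing to $u_M = {\mathcal T}_M(u)$ does not increase $J_{\delta,p}$; this reduces to $u \in L^p(\R^N) \cap L^\infty(\R^N)$ with $L := \sup|\Psi_u|_p < \infty$. Multiplying the uniform bound by $\eps\delta^{\eps-1}$, integrating over $\delta \in (0, L)$, and applying Fubini give
\[
\iint_{\R^{2N}} \frac{|\Psi_{u_M}(x,y) - \Psi_{u_M}(x,x)|_p^{p+\eps}}{|x-y|^{p+\eps}}\cdot \frac{\eps}{|x-y|^{N-\eps}}\, dx\, dy \le C(p + \eps).
\]
An $L^p$ analog of inequality \eqref{stimasit2} of Lemma~\ref{lem-L2}, applied with the weight $\rho_\eps(r) = \eps r^{\eps - N}$ (asymptotically a mollifier in the sense of Remark~\ref{rem-Hs}), forces $\liminf_{\eps \to 0}$ of the left-hand side to be at least $p Q_{N,p}\int_{\R^N}|\nabla u_M - \i A u_M|_p^p\, dx$. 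This yields $u_M \in W^{1,p}_A(\R^N)$ with a bound uniform in $M$, and letting $M \to \infty$ via Fatou's lemma concludes. The main obstacle is precisely this $L^p$ lower-bound lemma with variable exponent $p+\eps$: it is a direct Taylor-expansion-plus-Fatou argument mirroring Lemma~\ref{lem-L2}, but one must carefully track the constant $p Q_{N,p}$ emerging from the spherical average via \eqref{constant-z}; all remaining steps are transcriptions of the $p = 2$ proofs.
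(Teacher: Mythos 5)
Your proposal is correct and follows essentially the same route as the paper, which likewise proves Theorem~\ref{main2} by rerunning the three $p=2$ lemmas with the $L^p$ maximal inequality, the density of $C^\infty_c(\R^N)$ in $W^{1,p}_A(\R^N)$, the $|\cdot|_p$-version of \eqref{key_p}, and the identity \eqref{constant-z}, including (implicitly) the truncation-plus-Fubini argument with the kernel $\eps r^{\eps-N}$ and the $L^p$ analogue of \eqref{stimasit2} for the converse. The only nitpick is that ${\mathcal T}_M$ is nonexpansive for the Euclidean modulus rather than exactly $1$-Lipschitz for $|\cdot|_p$, so the superlevel-set inclusion holds only up to a fixed factor; this changes nothing but constants.
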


Recall that $Q_{N, p}$ is defined by \eqref{Q-Np}. 

\begin{proof} 
We have the maximal function estimates in the form
	$$
	\int_{\R^N} |\M_{\sigma}(g, x)|_p^p dx\leq C_p \int_{\R^N} |g|_p^p dx, \quad\,\,\text{for all $g\in L^p(\R^N)$}.
	$$
	for all $\sigma \in \mathbb{S}^{N-1}$ and $g \in L^{p}(\R^N)$, either complex or real valued.
It is readily checked (repeat the proof of \cite[Theorem 7.22]{LL} with straightforward adaptations) 
that $C^{\infty}_{c}(\R^N)$ is dense in $W^{1,p}_{A}(\R^N)$. Lemma~\ref{lem-variant} holds in the modified form
\begin{equation*}
J_{\delta, p} (u)
\leq C_{N, p}  \left( \int_{\R^N}|\nabla u-\i A(x)u|_p^p\, dx + \big(\|\nabla A\|_{L^\infty(\R^N)}^p + 1 \big) \int_{\R^N} |u|_p^p  \, dx \right), 
\end{equation*}
for all $u\in W^{1,p}_A(\R^N)$ and $\delta>0$. To achieve
this conclusion, it is sufficient to observe that, see \eqref{key_p}, 
\begin{equation*}
\big|\Psi(x, x + h e_N) - \Psi(x, x) \big|_p \le  h \M_{N}( |\nabla u - \i A u|_p, x) +  h^2 \|\nabla A \|_{L^\infty(\R^N)} \M_{N}(|u|_p, x). 
\end{equation*}
The rest of the proof follows verbatim. Lemma~\ref{formul} holds in the form 
\begin{equation*}
\lim_{\delta\searrow 0} J_{\delta,p} (u)= 	Q_{N,p}\int_{\R^N}|\nabla u-\i A(x)u|_p^p\, dx,
\end{equation*}
for every $u \in W^{1,p}_A(\R^N)$. In fact, mimicking the proof of Lemma~\ref{formul},   one obtains
\begin{equation*}
\lim_{\delta \searrow 0} \iint_{\{|\Psi_u(x,y)-\Psi_u(x,x)|_p>\delta\}}\frac{\delta^p}{|x-y|^{N+p}} \, dx \, dy = \dfrac{1}{p} \int_{\mathbb{S}^{N-1}}\int_{\R^N} |(\nabla u -\i A u)\cdot \sigma|^p \, dx  \, d\sigma.
\end{equation*}
The final conclusion follows from \eqref{constant-z}. 
Lemma~\ref{lem-R2} can be modified 
accordingly with minor modifications, replacing $|\cdot|$ with $|\cdot|_{p}$. 
\end{proof}

\section{Convergence almost everywhere and convergence in $L^1$} \label{sect-pointwise}

\medskip 

Motivated by the work in \cite{BHN2} (see also \cite{PS}), we are interested in  other modes of convergence in the context of Theorems~\ref{main-BBM} and \ref{main}. 
We only consider the case $p=2$. Similar results hold for $ p \in (1,  + \infty)$ with similar proofs.  We begin with the corresponding results related to Theorem~\ref{main-BBM}.  For $u \in L^1_{\loc}(\R^N)$, set  
$$
D_n(u, x): = \int_{\R^{N}} \frac{|\Psi_u (x, y) -  \Psi_u(x, x)|^2}{|x - y|^2} \rho_n (|x - y|) \, dy,\quad \mbox{ for } x \in \R^N.  
$$
We have 
\begin{proposition}\label{pro-BBM} Let $A: \R^N \to \R^N$ be Lipschitz, $u \in H^1_A(\R^N)$, and let $(\rho_n)$ be a sequence of radial mollifiers such that 
\begin{equation*}
\sup_{t > 1} \sup_n  t^{-2} \rho_n(t) < + \infty.  
\end{equation*}
We have 
\begin{equation*}
\lim_{n \to + \infty} D_n(u, x) = 2 Q_{N} |\nabla u(x) - \i A(x) u(x)|^2,\quad 
\mbox{for a.e.\ $x \in \R^N$},
\end{equation*}
and 
\begin{equation*}
\lim_{n \to + \infty} D_n(u, \cdot) =  2 Q_{N} |\nabla u( \cdot ) - \i A(\cdot ) u(\cdot )|^2, 
\quad\mbox{in $L^1(\R^N)$}.  
\end{equation*}
\end{proposition}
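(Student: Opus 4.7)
I plan to establish the pointwise a.e.\ convergence first and then upgrade to $L^1$ convergence via the Scheff\'e--Brezis lemma (for nonnegative $f_n$ with $f_n \to f$ a.e.\ and $\int f_n \to \int f < \infty$, one has $f_n \to f$ in $L^1$). The integral convergence
$\int_{\R^N} D_n(u, x)\, dx \to 2 Q_N \int_{\R^N} |\nabla u - \i A u|^2\, dx$
is exactly \eqref{main-BBM-2} in Theorem~\ref{main-BBM}, so only the pointwise statement requires genuinely new work.

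For the a.e.\ convergence I would split $D_n(u, x)$ according to $\{|x-y| < 1\}$ and $\{|x-y| \ge 1\}$. Changing variables $y = x + r\sigma$, with $r>0$ and $\sigma \in \mathbb{S}^{N-1}$, the local piece reads
\begin{equation*}
\int_0^1 \int_{\mathbb{S}^{N-1}} \frac{|\Psi_u(x, x + r\sigma) - \Psi_u(x, x)|^2}{r^2}\, d\sigma\, \rho_n(r) r^{N-1}\, dr.
\end{equation*}
For smooth $u$, the Taylor-type expansion $\Psi_u(x, x+r\sigma) - \Psi_u(x,x) = r\sigma \cdot (\nabla u(x) - \i A(x) u(x)) + O(r^2)$ (as in \eqref{L2-part1}), combined with the identity $\int_{\mathbb{S}^{N-1}} |V \cdot \sigma|^2\, d\sigma = 2 Q_N |V|^2$ from \eqref{constant-z} and the mollifier property $\int_0^1 \rho_n(r) r^{N-1}\, dr \to 1$, delivers $2 Q_N |\nabla u(x) - \i A(x) u(x)|^2$ in the limit. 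For the tail, the crude bound $|\Psi_u(x,y) - \Psi_u(x,x)|^2 \le 2(|u(x)|^2 + |u(y)|^2)$ together with the assumption $\sup_{t>1}\sup_n t^{-2} \rho_n(t) < \infty$ reduces matters to the convolution $|u|^2 * \eta_n$ with kernel $\eta_n(z) := \rho_n(|z|)|z|^{-2} \mathbf{1}_{|z|\ge 1}$; since
\begin{equation*}
\|\eta_n\|_{L^1(\R^N)} = |\mathbb{S}^{N-1}| \int_1^\infty \rho_n(r) r^{N-3}\, dr \le |\mathbb{S}^{N-1}| \int_1^\infty \rho_n(r) r^{N-1}\, dr \longrightarrow 0
\end{equation*}
(using $r^{N-3} \le r^{N-1}$ on $[1, \infty)$), the tail vanishes almost everywhere.

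The main obstacle is to extend the pointwise a.e.\ identity from $C^2_c(\R^N)$ to a general $u \in H^1_A(\R^N)$, since density in $H^1_A$ does not transfer pointwise convergence automatically. My strategy is to establish a uniform-in-$n$ pointwise bound of the form
\begin{equation*}
D_n(u, x) \le C\big(\M(|\nabla u - \i A u|)(x)\big)^2 + C\|\nabla A\|_{L^\infty(\R^N)}^2 \big(\M(|u|)(x)\big)^2 + C (|u|^2 * \eta_n)(x),
\end{equation*}
obtained by importing \eqref{p1-0} and \eqref{key_p} together with the maximal-function machinery of Lemma~\ref{lem-variant} (via a spherical average over $\sigma$), where $\M$ denotes a Hardy--Littlewood-type maximal operator. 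Each term on the right-hand side lies in $L^1(\R^N)$ uniformly in $n$, so Lebesgue's dominated convergence applied to a sequence $u_k \to u$ in $H^1_A(\R^N)$ with $u_k \in C^2_c(\R^N)$ transfers the pointwise convergence from $u_k$ to $u$. The $L^1$ conclusion then follows from Scheff\'e--Brezis as outlined above.
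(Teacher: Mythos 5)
Your overall skeleton (uniform maximal-function bound, density of smooth functions, Taylor expansion for the smooth case, Scheff\'e plus \eqref{main-BBM-2} for the $L^1$ statement) is the right one, but the decisive step --- passing from $u_k \in C^2_c(\R^N)$ to a general $u \in H^1_A(\R^N)$ for the \emph{pointwise a.e.} limit --- is not correctly argued. Dominated convergence cannot ``transfer pointwise convergence from $u_k$ to $u$'': knowing that $D_n(u_k,x) \to 2Q_N|\nabla u_k(x)-\i A(x)u_k(x)|^2$ a.e.\ for each fixed $k$, together with an $n$-uniform integrable majorant of $D_n(u,\cdot)$, says nothing about the behaviour of the sequence $n \mapsto D_n(u,x)$ at a fixed $x$; DCT only interchanges a limit with an integral. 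What is actually needed is to apply the $n$-uniform bound to the \emph{difference} $u-u_k$, exploiting that its right-hand side is controlled (in $L^1$, or in the weak-type sense) by $\|u-u_k\|_{H^1_A(\R^N)}^2$, together with some sublinearity, e.g. $|D_n(u,x)^{1/2}-D_n(u_k,x)^{1/2}| \le D_n(u-u_k,x)^{1/2}$ since $\Psi_u$ is linear in $u$; one then concludes that the ``bad set'' where the discrepancy has positive $\limsup$ is null, either by a Chebyshev/weak-type estimate or by extracting a.e.\ convergent subsequences. This is exactly what the paper does: it proves the uniform bound \eqref{pro-BBM-1} in terms of $M(|\nabla u-\i Au|^2)$ and $M(|u|^2)$, introduces the sets $\Omega_\eps$, uses the weak-type maximal estimate to get \eqref{pro-BBM-4}, and applies it to $u-v$ with $v$ smooth. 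As written, your proposal is missing this mechanism, so the a.e.\ claim for general $u$ is not established.

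Two further points. First, your displayed majorant $C\big(\M(|\nabla u-\i Au|)(x)\big)^2$ with a Hardy--Littlewood-type maximal operator is not justified: by Jensen the square of an average lies \emph{below} the average of the square, so the bounds one actually gets are either $M(|\nabla u-\i Au|^2)(x)$ (via Corollary~\ref{cor-maximal}), which is only weak-$L^1$ --- the reason the paper runs a weak-type argument rather than using an integrable dominant --- or, following \eqref{p1-0}--\eqref{key_p} and Lemma~\ref{lem-variant}, the spherical average $\int_{\mathbb{S}^{N-1}}|\M_\sigma(|\nabla u-\i Au|,x)|^2\,d\sigma$, which \emph{is} integrable by Lemma~\ref{maxprop} and Fubini (this is the dominant the paper uses for Proposition~\ref{pro-main}); if that is what you intend, it must be said, since the Hardy--Littlewood maximal function does not control directional maximal functions pointwise. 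Second, the claim that the tail ``vanishes almost everywhere'' because $\|\eta_n\|_{L^1}\to 0$ is fine for bounded (in particular smooth compactly supported) $u$, but for a general $u\in L^2(\R^N)$ the convergence $|u|^2\ast\eta_n\to 0$ holds only in $L^1$, hence a.e.\ only along subsequences; in a corrected approximation argument this term should simply be kept as a constant of size $\sup_{t>1}\sup_n t^{-2}\rho_n(t)\,\|u-u_k\|_{L^2(\R^N)}^2$, as in \eqref{pro-BBM-1}. Your use of Scheff\'e's lemma together with \eqref{main-BBM-2} for the $L^1$ convergence is correct, and in fact spells out a step the paper leaves implicit.
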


Before giving the proof of Proposition~\ref{pro-BBM}, we  recall the following result established in \cite[Lemma 1]{BHN3} (see also\cite[Lemma 2]{BHN2} for a more general version). 

\begin{lemma} \label{lem-maximal-BN} Let $r>0$,  $x \in \R^N$ 
	and $f \in L^1_{{\rm loc}}(\R^N)$.  We have
\begin{equation*}
\int_{\mathbb{S}^{N-1}} \int_0^r |f(x + s \sigma)| \, d s \, d \sigma \le C_N r M(f)(x). 
\end{equation*}
\end{lemma}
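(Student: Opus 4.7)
The plan is to pass from the spherical integral to a radially weighted integral on the Euclidean ball $B_r(x)$, and then to peel off dyadic annuli, controlling each one by the defining average used in the Hardy--Littlewood maximal function $M(f)$.

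First I would use polar coordinates centered at $x$: writing $y = x + s\sigma$ with $s\in(0,r)$ and $\sigma\in\mathbb{S}^{N-1}$, the Jacobian is $s^{N-1}$, so
$$
\int_{\mathbb{S}^{N-1}}\int_0^r |f(x+s\sigma)|\,ds\,d\sigma=\int_{B_r(x)}\frac{|f(y)|}{|y-x|^{N-1}}\,dy,
$$
and it suffices to bound this weighted volume integral by $C_N\, r\, M(f)(x)$.

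Next, I would introduce the dyadic annuli $A_k := \{ y \in \R^N : 2^{-k-1} r \le |y-x| < 2^{-k} r \}$ for $k=0,1,2,\dots$, which cover $B_r(x)\setminus\{x\}$. On $A_k$ the weight obeys $|y-x|^{-(N-1)} \le (2^{-k-1} r)^{-(N-1)}$, while the defining inequality for the Hardy--Littlewood maximal function gives
$$
\int_{A_k}|f(y)|\,dy \le \int_{B_{2^{-k} r}(x)} |f(y)|\,dy \le |B_1|\,(2^{-k} r)^{N}\, M(f)(x).
$$
Multiplying these two bounds on each $A_k$ yields
$$
\int_{A_k}\frac{|f(y)|}{|y-x|^{N-1}}\,dy \le C_N\, 2^{-k}\, r\, M(f)(x).
$$

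Finally, summing the resulting geometric series $\sum_{k\ge 0} 2^{-k} = 2$ produces the desired estimate with a constant $C_N$ depending only on $N$. There is no substantial obstacle: one only needs to verify that the constant remains independent of both $r$ and $f$, which is automatic from the scale-invariant character of the dyadic decomposition, and to notice that the inequality is trivial when $M(f)(x) = +\infty$, so the hypothesis $f\in L^1_{\loc}(\R^N)$ is all that is required for the right-hand side to make sense.
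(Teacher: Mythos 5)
Your argument is correct: the polar-coordinate identity $\int_{\mathbb{S}^{N-1}}\int_0^r|f(x+s\sigma)|\,ds\,d\sigma=\int_{\{|y-x|<r\}}|f(y)|\,|y-x|^{-(N-1)}\,dy$ is valid by Tonelli, the dyadic annuli $A_k$ cover the punctured ball, the bound on each $A_k$ gives $C_N 2^{-k} r M(f)(x)$ with $C_N=2^{N-1}|B_1|$, and the geometric series closes the estimate for every $x$ (trivially when $M(f)(x)=+\infty$). Note that the paper itself offers no proof of this lemma---it is quoted from Lemma~1 of Brezis--Nguyen \cite{BHN3}---so your dyadic-decomposition argument serves as a complete, self-contained substitute, and it is in fact the standard way this estimate is proved.
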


\noindent
Here and in what follows,  for $x \in \R^N$ and $r >0$, let $B_x(r)$ denote the open ball in $\R^N$ centered at $x$ and of radius $r$. Moreover, 
$M(f)$ denotes the maximal function of $f$,
$$
M(f)(x):=\sup_{r>0}\frac{1}{|B_x(r)|}\int_{B_x(r)} |f(y)|dy,\quad\text{$x\in\R^N$.}
$$
As a consequence of Lemma~\ref{lem-maximal-BN}, we have 
\begin{corollary}\label{cor-maximal} Let $f \in L^1_{\loc}(\R^N)$ and $\rho$ be a nonnegative radial  function such that 
\begin{equation}\label{rhon-cor}
\int_0^\infty \rho(r) r^{N-1} \, dr = 1. 
\end{equation}
Then, for a.e. $x \in \R^N$, 
$$
\int_{B_x(r)} \int_0^1 |f\big(t (y-x) + x \big)| \rho (|y-x|)  \, dt \, d y \le C_N M(f)(x). 
$$
\end{corollary}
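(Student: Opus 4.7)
The plan is to reduce the inequality to Lemma~\ref{lem-maximal-BN} by switching to spherical coordinates centered at $x$, then exploit the radial nature of $\rho$ and its normalization \eqref{rhon-cor} to conclude. There is no substantive obstacle — the result is essentially an averaging argument dressed up with the weight $\rho(|y-x|)$.

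First, I would write $y - x = s\sigma$ with $s = |y - x| \in (0, \infty)$ and $\sigma \in \mathbb{S}^{N-1}$, so that $dy = s^{N-1} \, ds \, d\sigma$. Taking $r = +\infty$ (or any positive $r$), the left-hand side becomes
\begin{equation*}
\int_{\mathbb{S}^{N-1}} \int_0^{\infty} \rho(s)\, s^{N-1} \int_0^1 |f(x + t s \sigma)| \, dt \, ds \, d\sigma.
\end{equation*}

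Second, for each fixed $s > 0$ I would perform the change of variable $\tau = t s$ (so $d\tau = s \, dt$) in the innermost integral and then invoke Lemma~\ref{lem-maximal-BN} with radius $s$:
\begin{equation*}
\int_0^1 \int_{\mathbb{S}^{N-1}} |f(x + t s \sigma)| \, d\sigma \, dt = \frac{1}{s} \int_0^{s} \int_{\mathbb{S}^{N-1}} |f(x + \tau \sigma)| \, d\sigma \, d\tau \le C_N \, M(f)(x).
\end{equation*}
Notice that the factor $s^{-1}$ coming from the change of variables exactly cancels the factor $s$ produced by Lemma~\ref{lem-maximal-BN}, so the resulting bound is independent of $s$.

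Finally, plugging this uniform bound back and using the normalization \eqref{rhon-cor} yields
\begin{equation*}
\int_{\mathbb{S}^{N-1}} \int_0^{\infty} \rho(s)\, s^{N-1} \cdot C_N M(f)(x) \, ds \, d\sigma \le C_N \, |\mathbb{S}^{N-1}| \, M(f)(x),
\end{equation*}
which is the desired estimate (after absorbing $|\mathbb{S}^{N-1}|$ into the dimensional constant $C_N$). The pointwise estimate holds at every Lebesgue point of $|f|$, hence a.e.\ in $\R^N$.
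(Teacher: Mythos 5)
Your proof is correct and follows essentially the same route as the paper: pass to polar coordinates centered at $x$, rescale the inner integral via $\tau = ts$ so that Lemma~\ref{lem-maximal-BN} gives a bound uniform in $s$, and then use the normalization \eqref{rhon-cor}. The only (harmless) slip is that your final display integrates over $\mathbb{S}^{N-1}$ a second time even though the spherical integration was already consumed in the bound $\int_0^1\int_{\mathbb{S}^{N-1}}|f(x+ts\sigma)|\,d\sigma\,dt \le C_N M(f)(x)$, producing a spurious factor $|\mathbb{S}^{N-1}|$ that is indeed absorbed into $C_N$.
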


\begin{proof} Using polar coordinates, we have 
\begin{equation*}
\int_{B_x(r)} \int_0^1 |f\big(t (y-x) + x \big)| \rho (|y-x|)  \, dt \, d y = \int_{0}^r \int_{\mathbb{S}^{N-1}} \int_0^1 |f\big(x + t s \sigma  \big)| s^{N-1}\rho (s)  \, dt \, d \sigma \,d s. 
\end{equation*}
Applying Lemma~\ref{lem-maximal-BN}, we obtain, for a.e. $x \in \R^N$
$$
 \int_{\mathbb{S}^{N-1}} \int_0^1 |f\big(x + t s \sigma  \big)| \, dt \, d \sigma  \le C_N M(f)(x).  
$$
It follows from \eqref{rhon-cor} that, for a.e. $x \in \R^N$, 
\begin{equation*}
\int_{B_x(r)} \int_0^1 |f\big(t (y-x) + x \big)| \rho (|y-x|)  \, dt \, d y \le C_N M(f)(x), 
\end{equation*}
which is the conclusion. 
\end{proof}

We are ready to give the proof of Proposition~\ref{pro-BBM}.

\begin{proof}[Proof of Proposition~\ref{pro-BBM}] 
We first establish that, for a.e. $x \in \R^N$, 
\begin{equation}\label{pro-BBM-1}
| D_n(u, x) |  \le 
C \Big( M (|\nabla u - \i A u|^2)(x) +   M (|u|^2)(x) \Big) + m \int_{\R^N \setminus B_x(1)} |u(y)|^2\, dy,  
\end{equation} 
where 
$$
m: =2  \sup_{t > 1} \sup_{n} t^{-2} \rho_n(t).
$$ 
Here and in what follows in this proof, $C$ denotes a positive constant independent of $x$.
Indeed, we have, as in \eqref{L1-part1-2},  for a.e. $x, y \in \R^N$ with  $|y - x| < 1$, 
\begin{align}
\frac{|\Psi_u(x, y) - \Psi_u(x, x)|^2}{|x - y|^2} \le &  2 \int_0^1 \big|\nabla u \big(t (y-x) + x \big)   - \i A \big(t (y-x) + x \big)   u \big(t (y-x) + x \big)  \big|^2 \, dt   \nonumber\\[6pt] 
& + 2 \|\nabla A \|_{L^\infty(\R^N)}^2    \int_0^1 \big|u \big(t (y-x) + x \big) \big|^2 \, dt.  
\end{align}
This implies, for a.e. $x \in \R^N$,  
\begin{align*}
\int_{B_x(1)} &  \frac{|\Psi_u(x, y) - \Psi_u(x, x)|^2}{|x - y|^2} \rho_n(|y-x|) \, dy  \\[6pt] 
\le  &  2 \int_{B_x(1)} \int_0^1 \big|\nabla u \big(t (y-x) + x \big)   - \i A \big(t (y-x) + x \big)   u \big(t (y-x) + x \big)  \big|^2  \rho_n(|y-x|)\, dt \, dy    \nonumber\\[6pt] 
& + 2 \|\nabla A \|_{L^\infty(\R^N)}^2  \int_{B_x(1)} \int_0^1 \big|u \big(t (y-x) + x \big) \big|^2 \rho_n(|y-x|)  \, dt \, dy.  
\end{align*}
Applying Corollary~\ref{cor-maximal}, we have, for a.e. $x \in \R^N$,  
\begin{equation}\label{pro-BBM-2}
\int_{B_x(1)}   \frac{|\Psi_u(x, y) - \Psi_u(x, x)|^2}{|x - y|^2} \rho_n(|y-x|) \, dy \le C M(|\nabla u - \i A u|^2)(x) + C M(|u|^2)(x). 
\end{equation}
On the other hand, we  get 
\begin{multline}\label{pro-BBM-3}
\int_{\R^N \setminus B_x(1)}  \frac{|\Psi_u(x, y) - \Psi_u(x, x)|^2}{|x - y|^2} \rho_n(|y-x|) \, dy  \\[6pt]
\le  2|u(x)|^2 + 2 \int_{\R^N \setminus B_x(1)} |u(y)|^2 \rho_n(|y-x|) |x-y|^{-2} \, dy \\[6pt]
\le  2|u(x)|^2 + m \int_{\R^N \setminus B_x(1)} |u(y)|^2 \, d y.  
\end{multline}
A combination of \eqref{pro-BBM-2} and \eqref{pro-BBM-3} yields \eqref{pro-BBM-1}. 
Set, for $v \in H^1_A(\R^N)$ and $\eps \ge 0$, 
\begin{equation*}
\Omega_\eps (v) := \Big\{x \in \R^N: \limsup_{n \to + \infty} \Big|D_n(v, x) - 2 Q_N |\nabla v(x) - \i A(x) v(x)|^2 \Big| > \eps \Big\}. 
\end{equation*} 
By \eqref{L2-part1}, one has, for $v \in C^2_{c}(\R^N)$ and $\eps \ge 0$,  
\begin{equation*}
|\Omega_\eps(v)| = 0. 
\end{equation*}
Using the theory of maximal functions, see e.g., \cite[Theorem 1 on page 5]{Stein}, we derive from \eqref{pro-BBM-1} that, for any $\eps > 0$ and for any $w \in H^1_A(\R^N)$  with  $ m \int_{\R^N} |w(y)|^2 \, dy  \le \eps / 2$, 
\begin{equation}\label{pro-BBM-4}
|\Omega_\eps (w)|  \le \frac{C}{\eps} \int_{\R^N} \left( |\nabla w (x) - \i A(x) w(x) \Big|^2 + |w(x)|^2 \right)\, dx.
\end{equation}
Fix $\eps >0$ and let $v \in C^2_{c}(\R^N)$ with $\max\{1, m \} \|v - u \|_{H^1_A(\R^N)} \le \eps/2$. We derive from \eqref{pro-BBM-4} that  
\begin{equation*}
|\Omega_\eps(u)| \leq |\Omega_\eps(u - v)| \le \frac{C}{\eps} \| v - u \|_{H^1_A(\R^N)}^2 \le C \eps. 
\end{equation*} 
Since $\eps > 0$ is arbitrary, one reaches the conclusion that $|\Omega_0(u)| = 0$. The proof is complete. 
\end{proof}

We next discuss the corresponding results related to Theorem~\ref{main}. 
Given $u \in L^1_{{\rm loc}}(\R^N)$, set, for $x \in \R^N$,  
	$$
	J_\delta(u, x) =  \int_{\{|\Psi_u(x,y)-\Psi_u(x,x)|>\delta\}}\frac{\delta^2}{|x-y|^{N+2}}dy. 
	$$
We have

\begin{proposition}\label{pro-main} Let $A:\R^N\to\R^N$ be Lipschitz and let $u\in H^1_A(\R^N)$.
	We have 
	\begin{equation}\label{pro-conclusion1}
	\lim_{\delta \searrow 0} J_\delta (u, x)= Q_N|\nabla u(x) - \i A(x) u(x)|^2, \quad\mbox{ for a.e. } x \in \R^N  
	\end{equation}
	and 
	\begin{equation}\label{pro-conclusion2}
	\lim_{\delta \searrow 0} J_\delta(u, \cdot) = Q_N|\nabla u( \cdot ) - \i A(\cdot ) u(\cdot )|^2,\quad \mbox{ in } L^1(\R^N).   
	\end{equation}
\end{proposition}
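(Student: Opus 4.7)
The plan is to mimic Proposition~\ref{pro-BBM}: build a uniform-in-$\delta$ pointwise envelope for $J_\delta(u,\cdot)$ that lies in $L^1(\R^N)$, verify the pointwise limit on the dense class $C^2_c(\R^N)$, push to general $u\in H^1_A(\R^N)$ by density using the linearity of $\Psi_w$ in $w$, and finally upgrade the a.e.\ convergence to $L^1$ convergence through Scheff\'e's lemma applied together with the integral convergence provided by Lemma~\ref{formul}.

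For the envelope I would split $J_\delta(u,x)$ according to $|x-y|<1$ vs.\ $|x-y|\ge 1$. The far piece is controlled via $|\Psi_u(x,y)-\Psi_u(x,x)|\le |u(x)|+|u(y)|$ and $\delta^2\chi_{\{|\Psi_u-\Psi_u|>\delta\}}\le |\Psi_u-\Psi_u|^2$, giving a bound by $C_N|u(x)|^2+C_N\int_{|x-y|\ge 1}|u(y)|^2|x-y|^{-N-2}\,dy$ whose $L^1(\R^N)$-norm is controlled by $C_N\|u\|_{L^2}^2$. On the near piece, spherical coordinates $y=x+h\sigma$ combined with the directional bound (cf.\ \eqref{key_p})
\begin{equation*}
|\Psi_u(x,x+h\sigma)-\Psi_u(x,x)|\le h\,\M_\sigma(|\nabla u-\i Au|,x)+h^2\|\nabla A\|_{L^\infty(\R^N)}\M_\sigma(|u|,x),
\end{equation*}
together with the elementary distributional argument of Lemma~\ref{lem-variant}, yields
\begin{equation*}
J_\delta^{\mathrm{near}}(u,x)\le C_N\int_{\mathbb{S}^{N-1}}\Bigl[\M_\sigma^2(|\nabla u-\i Au|,x)+\|\nabla A\|_{L^\infty(\R^N)}^2\M_\sigma^2(|u|,x)\Bigr]\,d\sigma,
\end{equation*}
which, by Fubini and Lemma~\ref{maxprop}, is an $L^1(\R^N)$ function of $x$ with norm controlled by $\|u\|_{H^1_A(\R^N)}^2$.

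The pointwise limit on $C^2_c(\R^N)$ follows as in Lemma~\ref{formul}: after the change of variables $y=x+\delta h\sigma$, the indicators of $\{|\Psi_u(x,x+\delta h\sigma)-\Psi_u(x,x)|>\delta\}$ converge to $\chi_{\{h|(\nabla u-\i Au)(x)\cdot\sigma|>1\}}$ through the identity $(\partial \Psi_u/\partial y)(x,x)=\nabla u(x)-\i A(x)u(x)$, the envelope constructed above is integrable against $h^{-3}dh\,d\sigma$ for a.e.\ fixed $x$ so Dominated Convergence applies, and \eqref{constant-z} identifies the resulting integral with $Q_N|\nabla u(x)-\i A(x)u(x)|^2$. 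For general $u\in H^1_A(\R^N)$, linearity of $\Psi_w$ in $w$ gives the pointwise sub-linear triangle inequality
\begin{equation*}
J_\delta(u,x)\le(1-\eps')^{-2}J_{(1-\eps')\delta}(v,x)+\eps'^{-2}J_{\eps'\delta}(u-v,x)
\end{equation*}
(and its symmetric companion obtained by swapping $u$ and $v$) for every $v\in C^2_c(\R^N)$ and every $\eps'\in(0,1)$. Choosing $v_k\to u$ in $H^1_A(\R^N)$ so that, along a subsequence, the envelope built above applied to $u-v_k$ tends to $0$ a.e., and sending first $\delta\searrow 0$, then $k\to\infty$, and finally $\eps'\searrow 0$, yields \eqref{pro-conclusion1}. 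Assertion \eqref{pro-conclusion2} then follows from Scheff\'e's lemma applied to the non-negative functions $J_\delta(u,\cdot)$, since \eqref{pro-conclusion1} gives a.e.\ convergence and Lemma~\ref{formul} provides $\int_{\R^N}J_\delta(u,x)\,dx\to Q_N\int_{\R^N}|\nabla u-\i Au|^2\,dx$.

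The main technical hurdle is the pointwise envelope in the first step: the natural dominating function involves the spherical integral $\int_{\mathbb{S}^{N-1}}\M_\sigma^2(f,x)\,d\sigma$, which is \emph{not} controlled pointwise by $M(|f|^2)(x)$, so its $L^1(\R^N)$ integrability must be extracted indirectly via Fubini combined with the $L^2$ theory of directional maximal functions (Lemma~\ref{maxprop}); the rest of the argument is a faithful adaptation of the scheme used for Proposition~\ref{pro-BBM}.
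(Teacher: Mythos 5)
Your argument is correct, and the pointwise part follows the paper's scheme closely: the uniform envelope $C_N\int_{\mathbb{S}^{N-1}}\bigl[\M_\sigma^2(|\nabla u-\i Au|,x)+\|\nabla A\|_{L^\infty(\R^N)}^2\M_\sigma^2(|u|,x)\bigr]d\sigma$ obtained from \eqref{key_p} and the level-set computation of Lemma~\ref{lem-variant} is exactly the function $\M(u,x)$ the paper introduces, the pointwise limit on $C^2_c(\R^N)$ is taken from the proof of Lemma~\ref{formul}, and the sub-linear triangle inequalities plus density give \eqref{pro-conclusion1} just as in the paper. The two genuine differences are organizational and in the $L^1$ step. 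First, you keep the full $J_\delta$ throughout by enlarging the envelope with the far-field term $C_N|u(x)|^2+C_N\int_{\{|x-y|\ge 1\}}|u(y)|^2|x-y|^{-N-2}dy$, whereas the paper works with the truncated $\hat J_\delta$ (integration over $|x-y|<1$ only) and then shows separately that $J_\delta-\hat J_\delta\to 0$ a.e.\ and in $L^1$; your variant is slightly more economical and your observation that the spherical maximal envelope is only $L^1$ through Fubini and Lemma~\ref{maxprop}, not through a pointwise bound by $M(|f|^2)$, is exactly the point the paper exploits implicitly. Second, for \eqref{pro-conclusion2} you invoke Scheff\'e's lemma, combining the a.e.\ convergence with the integral convergence $\int_{\R^N}J_\delta(u,x)\,dx\to Q_N\int_{\R^N}|\nabla u-\i Au|^2\,dx$ from Lemma~\ref{formul}; the paper instead uses dominated convergence with the $L^1$ envelope $\M(u,\cdot)$ for $\hat J_\delta$ and a separate density argument (via Lemma~\ref{lem-variant}) for the tail. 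Your route buys a shorter $L^1$ argument at the cost of leaning on the already established limit formula of Theorem~\ref{main} (no circularity, since that is proved independently in Section~3), while the paper's route is self-contained at that point and in addition produces an explicit integrable dominating function, which is of independent use.
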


\begin{proof}
	For $v \in H^1_A(\R^N)$, set 
	$$
	\M(v, x) = \int_{\mathbb{S}^{N-1}}  \left(|\M_{\sigma}(|\nabla v - \i A v|, x)|^2 + \| \nabla A\|^2_{L^\infty(\R^N)} |\M_{\sigma}(|v|, x)|^2 \right) \, d \sigma,\quad \mbox{ for } x \in \R^N,
	$$
	and denote	
	$$
	\hat J_\delta(u, x) =  \int_{\{|\Psi_u(x,y)-\Psi_u(x,x)|>\delta, \, |y - x| < 1\}}\frac{\delta^2}{|x-y|^{N+2}}dy,\quad \mbox{for $x\in \R^N$}. 
	$$
	We first establish a variant of  \eqref{pro-conclusion1} and \eqref{pro-conclusion2} in which  $J_\delta$ is replaced by $\hat J_\delta$. 
	Using \eqref{key_p}, as in the proof of Lemma~\ref{lem-variant}, we have, for any $v \in H^1_A(\R^N)$, 
	$$
	\hat J_\delta(v, x) \le C_N \M(v, x)\mbox{ for all } \delta > 0. 
	$$
	We derive that,  for $u, u_n \in H^1_A(\R^N)$, and $\eps \in (0, 1)$, 
	\begin{equation}\label{pro-part1}
	\hat J_{\delta}(u, x) - (1-\eps)^{-2} \hat J_{(1-\eps) \delta} (u_n, x)
	\le \eps^{-2} C_N \M(u-u_n, x), 
	\end{equation}
	and 
	\begin{equation}\label{pro-part2}
	(1 - \eps)^2 \hat J_{\delta/ (1 - \eps)}(u_n, x) - \hat J_{ \delta} (u, x) \\[6pt] \le \eps^{-2} C_N \M(u-u_n,x).  
	\end{equation}
	On the other hand, one can check that, as in the proof of Lemma~\ref{formul}, for $u_n \in C^2_{c}(\R^N)$, 
	\begin{equation}\label{pro-part3}
	\lim_{\delta \searrow 0} \hat J_\delta(u_n, x) = Q_N |\nabla u_n(x) - \i A(x) u_n(x)|^2,\quad \mbox{ for } x \in \R^N. 
	\end{equation}
	We derive from \eqref{pro-part1}, \eqref{pro-part2}, and \eqref{pro-part3} that, for $ u \in H^1_A(\R^N)$,  
	\begin{equation}\label{pro-main-part-1}
	\lim_{\delta \searrow 0} \hat J_{\delta}(u, x) = Q_N |\nabla u(x) - \i A(x) u(x)|^2,\quad 
	\mbox{ for  a.e. } x \in \R^N. 
	\end{equation}
	and, we hence obtain,  by the Dominate convergence theorem, 
	\begin{equation}\label{pro-main-part-2}
	\lim_{\delta \searrow 0} \hat J_{\delta}(u, \cdot) = Q_N |\nabla u(\cdot) - \i A(\cdot) u(\cdot)|^2, \quad \mbox{ in } L^1(\R^N),  
	\end{equation}
	since $\M(u, x) \in L^1(\R^N)$. A straightforward computation yields 
	\begin{equation*}
	\lim_{\delta \searrow 0 } \int_{\{ |y - x| \ge 1\}}\frac{\delta^2}{|x-y|^{N+2}} \, dy  = 0. 
	\end{equation*}
	It follows that 
	\begin{equation}\label{pro-main-part-3}
	\lim_{\delta \searrow 0} [\hat J_{\delta}(u, x) - J_{\delta}(u, x)] = 0,\quad
	\mbox{ for  a.e. } x \in \R^N.
	\end{equation}
	We also have, for $w \in C^2_c(\R^N)$,  
	\begin{align*}
&	\lim_{\delta \searrow 0}	\iint_{\{ |\Psi_w(x, y) - \Psi_w(x, x)| > \delta, \;  |y - x| \ge 1\}}\frac{\delta^2}{|x-y|^{N+2}} dx\, dy \\
	&\leq 	\lim_{\delta \searrow 0}\iint_{\{(B_R\times\R^N)\cup(\R^N\times B_R), \; |y - x| \ge 1\}}\frac{\delta^2}{|x-y|^{N+2}} dx\, dy=0 , 
	\end{align*}
where $R>0$ is such that $\supp w \subset B_R$. 
	Using  Lemma~\ref{lem-variant} and the density of $C^2_c(\R^N)$ in $H^1_A(\R^N)$, we derive that, 
	\begin{equation}\label{pro-main-part-4}
	\lim_{\delta \searrow 0} [\hat J_{\delta}(u, \cdot) - J_{\delta}(u, \cdot)] = 0 \mbox{ in } L^1(\R^N). 
	\end{equation}
	The conclusion now follows from \eqref{pro-main-part-1}, \eqref{pro-main-part-2}, \eqref{pro-main-part-3} and \eqref{pro-main-part-4}. 
\end{proof}


\end{document}